\documentclass{amsart}

\makeindex

\setcounter{tocdepth}{1}
\usepackage{amsfonts,graphics,amsmath,amsthm,amsfonts,amscd,amssymb,amsmath,latexsym,multicol,euscript}
\usepackage{epsfig,url}
\usepackage{flafter}
\usepackage{hyperref}
\usepackage[all,cmtip,line]{xy}
\usepackage{array}
\usepackage[english]{babel}
\usepackage{overpic}
\usepackage{subfig}
\usepackage{multirow}

\usepackage{wrapfig}
\usepackage{enumitem}



 \oddsidemargin=0cm \evensidemargin=0cm%

\topmargin=-25pt \textwidth=16cm \textheight=23cm%

\pagestyle {headings}%

\theoremstyle{definition}

\theoremstyle{theorem}
\newtheorem{theoremalpha}{Theorem}
\theoremstyle{corollary}

\newtheorem{corollaryalpha}[theoremalpha]{Corollary}

\newtheorem{theorem}{Theorem}[section]
\newtheorem{lemma}[theorem]{Lemma}

\theoremstyle{corollary}
\newtheorem{corollary}[theorem]{Corollary}

\theoremstyle{definition}
\newtheorem{definition}[theorem]{Definition}

\newtheorem{example}[theorem]{Example}

\newtheorem{remark}[theorem]{Remark}

\numberwithin{equation}{section}


\newcommand\C{\mathbb{C}}
\newcommand\R{\mathbb{R}}
\newcommand\Q{\mathbb{Q}}
\newcommand\Z{\mathbb{Z}}
\renewcommand\P{\mathbb{P}}

\newcommand\eps{\varepsilon}

\newcommand{\mc}{\mathcal}

\newcommand{\ol}[1]{\overline{#1}}

\DeclareMathOperator{\codim}{codim}

\DeclareMathOperator{\ord}{ord}
\DeclareMathOperator{\mult}{mult}

\DeclareMathOperator{\Supp}{Supp}

\DeclareMathOperator{\Cent}{Cent}
\DeclareMathOperator{\vol}{vol}


\DeclareMathOperator{\Eff}{Eff}

\DeclareMathOperator{\Pic}{Pic}

\newcommand{\bm}{\mathbf B_-}  
\newcommand{\bp}{\mathbf B_+}  

\newcommand{\okbd}{\Delta}
\newcommand{\okval}{\Delta^{\text{val}}}
\newcommand{\oklim}{\Delta^{\lim}}

\newcommand{\Obody}{\text{Okounkov body }}




\begin{document}

\title[Asymptotic base loci via Okounkov bodies]{Asymptotic base loci via Okounkov bodies}

\author{Sung Rak Choi}
\address{Department of Mathematics, Yonsei University, Seoul, Korea}
\email{sungrakc@yonsei.ac.kr}

\author{Yoonsuk Hyun}
\address{Samsung Advanced Institute of Technology, Suwon, Korea}
\email{yoonsuk.hyun@samsung.com}

\author{Jinhyung Park}
\address{School of Mathematics, Korea Institute for Advanced Study, Seoul, Korea}
\email{parkjh13@kias.re.kr}

\author{Joonyeong Won}
\address{Center for Geometry and Physics, Institute for Basic Science, Pohang, Korea}
\email{leonwon@ibs.re.kr}

\subjclass[2010]{14C20}
\date{\today}
\keywords{Okounkov body, base locus, asymptotic valuation, Zariski decomposition, Seshadri constant}
\thanks{S. Choi and J. Park were partially supported by the National Research Foundation of Korea (NRF) grant funded by the Korea government (Ministry of Science and ICT) (No. NRF-2016R1C1B2011446).
J. Won was partially supported by IBS-R003-D1, Institute for Basic Science in Korea. }

\begin{abstract}
An Okounkov body is a convex subset of Euclidean space associated to a divisor on a smooth projective variety with respect to an admissible flag.
In this paper, we recover the asymptotic base loci from the Okounkov bodies
by studying various asymptotic invariants such as the asymptotic valuations and the moving Seshadri constants.
Consequently, we obtain the nefness and ampleness criteria of divisors in terms of the Okounkov bodies.
Furthermore, we compute the divisorial Zariski decomposition by the Okounkov bodies, and find upper and lower bounds for moving Seshadri constants given by the size of simplexes contained in the Okounkov bodies.
\end{abstract}

\maketitle

\tableofcontents

\section{Introduction}

It is a fundamental problem to understand the geometry of linear series or divisors on a variety in algebraic geometry. Since the introduction and treatment of the Okounkov bodies associated to big divisors by
Lazarsfeld-Musta\c{t}\u{a} (\cite{lm-nobody}) and Kaveh-Khovanskii (\cite{KK}) motivated by earlier works by Okounkov (\cite{O1}, \cite{O2}), there have been considerable attempts to extract various properties of divisors from the Okounkov bodies.
Let $D$ be a divisor on a smooth projective variety $X$ of dimension $n$.
Then the Okounkov body $\okbd_{Y_\bullet}(D)$ is a convex subset of the Euclidean space $\R^n$ associated to $D$ with respect to an admissible flag $Y_\bullet$.
In \cite{CHPW}, we defined and studied the valuative Okounkov body $\okval_{Y_\bullet}(D)$ and the limiting Okounkov body $\oklim_{Y_\bullet}(D)$ of a pseudoeffective divisor $D$ with respect to an admissible flag $Y_\bullet$.
They are also convex bodies in the Euclidean space $\R^n$ which coincide with the classical Okounkov body $\okbd_{Y_\bullet}(D)$ if $D$ is big.
For more details on Okounkov bodies, see Section \ref{oksec}.

It was shown that two pseudoeffective divisors are numerically equivalent to each other if and only if the associated limiting Okounkov bodies with respect to all admissible flags coincide (\cite[Proposition 4.1]{lm-nobody}, \cite[Theorem A]{jow}, \cite[Theorem C]{CHPW}).
We also refer to \cite{Roe} for a recent study on the local positivity and the local numerical equivalence using Okounkov bodies in the surface case.
In principle, every numerical property of pseudoeffective divisors can be encoded in the associated limiting Okounkov bodies with respect to all admissible flags.
On the other hand, the valuative Okounkov bodies are not numerical in nature and the main results of this paper do not hold for such bodies (see \cite[Remark 4.10]{CPW}).

One of the most important numerical properties of pseudoeffective divisors is the asymptotic base loci.
The principal aim of this paper is to study how to extract asymptotic base loci, more precisely, the restricted base locus $\bm(D)$ and the augmented base locus $\bp(D)$, from the limiting Okounkov bodies of a pseudoeffective divisor $D$. See Subsection \ref{asybasubsec} for definitions of asymptotic base loci.

The following is the first main result of this paper on the restricted base loci.

\begin{theoremalpha}[=Theorem \ref{thrm-B-criterion}]\label{main1}
Let $D$ be a pseudoeffective divisor on a smooth projective variety $X$ of dimension $n$.
Then the following are equivalent.
\begin{enumerate}
 \item[$(1)$] $x\in\bm(D)$.

 \item[$(2)$] For any admissible flag $Y_\bullet$ centered at $x$, the origin of $\R^n$ is not contained in $\oklim_{Y_\bullet}(D)$.

 \item[$(3)$] For some admissible flag $Y_\bullet$ centered at $x$, the origin of $\R^n$ is not contained in $\oklim_{Y_\bullet}(D)$.
\end{enumerate}
\end{theoremalpha}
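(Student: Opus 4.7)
The strategy is to reduce to the big-divisor case via the defining approximation $\oklim_{Y_\bullet}(D) = \lim_{\epsilon \to 0^+} \okbd_{Y_\bullet}(D + \epsilon A)$ from \cite{CHPW}, and then to identify membership of the origin in the Okounkov body of a big divisor with the non-membership of $x$ in the restricted base locus.

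First, fix an ample divisor $A$. From the construction in \cite{CHPW}, the bodies $\okbd_{Y_\bullet}(D+\epsilon A)$ form a nested family of compact convex sets, and
$$\oklim_{Y_\bullet}(D) \;=\; \bigcap_{\epsilon>0}\okbd_{Y_\bullet}(D+\epsilon A).$$
Hence $\mathbf{0}\in\oklim_{Y_\bullet}(D)$ if and only if $\mathbf{0}\in\okbd_{Y_\bullet}(D+\epsilon A)$ for every $\epsilon>0$, and the theorem is reduced to the big-divisor case.

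The heart of the proof is the claim that for any big divisor $D'$ and any admissible flag $Y_\bullet$ centered at $x$,
$$\mathbf{0}\in\okbd_{Y_\bullet}(D') \;\Longleftrightarrow\; x\notin\bm(D').$$
For $(\Leftarrow)$: $x\notin\bm(D')=\bigcup_{\delta>0}\mathrm{SB}(D'+\delta A)$ yields, for each $\delta>0$, a section $s_\delta\in H^0(X,m_\delta(D'+\delta A))$ with $s_\delta(x)\ne 0$; non-vanishing at $x$ propagates inductively along the flag, so $\nu_{Y_\bullet}(s_\delta)=\mathbf{0}$ and $\mathbf{0}\in\okbd_{Y_\bullet}(D'+\delta A)$ for every $\delta>0$; intersecting over $\delta$ and using $\okbd_{Y_\bullet}(D')=\oklim_{Y_\bullet}(D')$ for big $D'$ gives $\mathbf{0}\in\okbd_{Y_\bullet}(D')$. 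For $(\Rightarrow)$, $\mathbf{0}\in\okbd_{Y_\bullet}(D')$ only provides a sequence $\nu_{Y_\bullet}(s_m)/m\to\mathbf{0}$, not an honest section of zero valuation. I would pass instead through the asymptotic valuations: the condition $\mathbf{0}\in\okbd_{Y_\bullet}(D')$ forces the simultaneous vanishing of the asymptotic invariants $\sigma_{Y_1}(D'),\sigma_{Y_2\mid Y_1}(D'),\ldots$ attached to the flag, and Nakayama's characterization of $\bm$ via asymptotic valuations then produces, for every $\eta>0$, a section of some $m(D'+\eta A)$ not vanishing at $x$, giving $x\notin\bm(D')$.

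Combining the two steps, for every flag $Y_\bullet$ centered at $x$,
$$\mathbf{0}\in\oklim_{Y_\bullet}(D) \iff \mathbf{0}\in\okbd_{Y_\bullet}(D+\epsilon A)\ \forall\epsilon>0 \iff x\notin\bm(D+\epsilon A)\ \forall\epsilon>0 \iff x\notin\bm(D),$$
where the last equivalence is obtained by unwinding $\bm(D)=\bigcup_{\epsilon>0}\mathrm{SB}(D+\epsilon A)$. This establishes $(1)\Leftrightarrow(2)$, and $(2)\Leftrightarrow(3)$ is automatic since the right-hand side $x\notin\bm(D)$ does not depend on the flag. The main obstacle is the $(\Rightarrow)$ direction of the big-divisor claim, where one must bridge a limit-type statement in convex geometry and a genuine geometric statement about the restricted base locus; this is where the numerical-invariance machinery of asymptotic valuations must do the real work.
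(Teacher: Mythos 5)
Your reduction to the big case, the easy implication, and the equivalence of (2) and (3) are all fine and match the paper's structure: for big $D'$, if $x\notin\bm(D')$ then for every $\delta>0$ there is a section of some $m(D'+\delta A)$ not vanishing at $x$, its flag valuation is $\mathbf 0$, and intersecting over $\delta$ (continuity of Okounkov bodies) puts the origin in $\okbd_{Y_\bullet}(D')$. The problem is the converse (equivalently $x\in\bm(D')\Rightarrow\mathbf 0\notin\okbd_{Y_\bullet}(D')$), which you correctly identify as the crux but then do not actually prove. The difficulty is that the flag coordinates only satisfy $\nu_i(E)\leq\ord_{Y_i}(E)$, so a sequence of effective divisors $E_m\in|D'|_\R$ with $\nu_{Y_\bullet}(E_m)\to\mathbf 0$ gives no upper bound on $\mult_x(E_m)$: the iterated subtractions $-\nu_iY_i$ can erase multiplicity at each step. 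Consequently you cannot conclude that the asymptotic valuation $\ord_x(\|D'\|)$ vanishes, and Nakayama's (or ELMNP's) characterization of $\bm$ — which is stated for valuations computed on $X$ or on birational models, not for the iterated-restriction quantities $\sigma_{Y_{i+1}\mid Y_i}$ attached to a flag — does not convert "the flag-restricted invariants vanish" into "$x\notin\mathrm{SB}(D'+\eta A)$ for all $\eta$." That bridge is precisely the missing content.

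The paper closes this gap with a concrete mechanism you would need to reproduce. If $Y_1\subseteq\bm(D')$ the claim is immediate, since $\nu_1(E)=\ord_{Y_1}(E)\geq\ord_{Y_1}(\|D'\|)>0$ for every $E$. Otherwise let $k$ be the largest index with $Y_k\not\subseteq\bm(D')$, so $Y_{k+1}\subseteq\bm(D')$ and $\ord_{Y_{k+1}}(\|D'\|)>0$; the telescoping identity
\begin{equation*}
\nu_{k+1}(E)=\ord_{Y_{k+1}}(E|_{Y_k})-\nu_1(E)-\cdots-\nu_k(E)\;\geq\;\ord_{Y_{k+1}}(\|D'\|)-\sum_{i\leq k}\nu_i(E)
\end{equation*}
shows that any valuative point whose first $k$ coordinates are all less than $\tfrac{1}{2k}\ord_{Y_{k+1}}(\|D'\|)$ must have $(k{+}1)$-st coordinate at least $\tfrac12\ord_{Y_{k+1}}(\|D'\|)$, which is what keeps the origin out of the closed body. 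Without this (or an equivalent) quantitative trade-off between the early and later flag coordinates, the hard implication is unproved; your proposal as written stops exactly at the point where the real argument begins.
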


Note that a pseudoeffective divisor $D$ is nef if and only if $\bm(D)=\emptyset$. Thus we immediately obtain the following nefness criterion of divisors.

\begin{corollaryalpha}[=Corollary \ref{cor-nefness}]\label{maincor1}
Let $D$ be a divisor on a smooth projective variety $X$ of dimension $n$.
Then the following are equivalent.
\begin{enumerate}
 \item[$(1)$] $D$ is nef.

 \item[$(2)$] For any admissible flag $Y_\bullet$, the origin of $\R^n$ is contained in $\oklim_{Y_\bullet}(D)$.

 \item[$(3)$] For any point $x \in X$, there exists an admissible flag $Y_\bullet$ centered at $x$ such that the origin of $\R^n$ is contained in $\oklim_{Y_\bullet}(D)$.
\end{enumerate}
\end{corollaryalpha}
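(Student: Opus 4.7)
The plan is to deduce the corollary directly from Theorem \ref{main1} together with the standard characterization that a pseudoeffective divisor $D$ is nef if and only if $\bm(D)=\emptyset$. Under this reformulation, condition (1) reads simply: every point $x\in X$ satisfies $x\notin\bm(D)$. Thus the entire argument reduces to translating the ``not in $\bm(D)$'' condition into a statement about the origin lying in limiting Okounkov bodies, point by point.

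The key step is to negate the equivalence provided by Theorem \ref{main1}. For a given $x\in X$, the condition $x\notin\bm(D)$ is equivalent to each of the following: (i) the origin of $\R^n$ lies in $\oklim_{Y_\bullet}(D)$ for every admissible flag $Y_\bullet$ centered at $x$; and (ii) the origin lies in $\oklim_{Y_\bullet}(D)$ for some admissible flag centered at $x$. I would then observe that every admissible flag is centered at some point of $X$, and conversely every point of $X$ admits at least one admissible flag centered at it (take transverse smooth hypersurfaces through $x$). Taking the conjunction of (i) over all $x\in X$ reproduces condition (2) of the corollary, and taking the conjunction of (ii) over all $x\in X$ reproduces condition (3).

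The equivalences $(1)\Leftrightarrow(2)\Leftrightarrow(3)$ follow immediately. I do not foresee any real obstacle here, as the statement is a direct corollary of Theorem \ref{main1}; the only minor point worth making explicit is the non-vacuity of condition (3), which relies on the standard existence of admissible flags centered at an arbitrary point.
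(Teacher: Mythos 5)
Your proposal is correct and matches the paper's argument exactly: the paper also deduces Corollary~\ref{cor-nefness} immediately from Theorem~\ref{thrm-B-criterion} by using that a pseudoeffective divisor is nef if and only if $\bm(D)=\emptyset$ and negating the pointwise equivalences. The remark about the existence of admissible flags centered at an arbitrary point is a sensible touch but is left implicit in the paper.
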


To prove Theorem \ref{main1}, we use the asymptotic valuation at a given pseudoeffective divisor (see Subsection \ref{asyvalsubsec} for the definition).
Furthermore, we recover the divisorial components of $\bm(D)$ from the limiting Okounkov bodies, thereby obtaining the movability criterion of divisors (see Theorem \ref{thrm-movability}). We also compute the divisorial Zariski decomposition of a pseudoeffective divisor (see Section \ref{divzdsec}).

Next we prove the analogous results for the augmented base locus.
We define $U_{\geq 0}:=U \cap \R^n_{\geq 0}$ where $U$ is a small open neighborhood of the origin of $\R^n$.

\begin{theoremalpha}[=Theorem \ref{thrm-B+criterion}]\label{main2}
Let $D$ be a pseudoeffective divisor on a smooth projective variety $X$ of dimension $n$.
Then the following are equivalent.
\begin{enumerate}
\item[$(1)$] $x \in \bp(D)$.

\item[$(2)$] For any admissible flag $Y_\bullet$ centered at $x$, the subset $U_{\geq0}$ of $\R^n_{\geq 0}$ is not contained in $\oklim_{Y_\bullet}(D)$ for any small open neighborhood $U$ of the origin of $\R^n$.

\item[$(3)$] For some admissible flag $Y_\bullet$ centered at $x$, the subset $U_{\geq0}$ of $\R^n_{\geq 0}$ is not contained in $\oklim_{Y_\bullet}(D)$ for any small open neighborhood $U$ of the origin of $\R^n$.
\end{enumerate}
\end{theoremalpha}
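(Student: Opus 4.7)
The plan is to reduce Theorem \ref{main2} to Theorem \ref{main1} by exploiting the standard characterization
\[
\bp(D) \;=\; \bigcap_{A} \bm(D-A),
\]
where the intersection is taken over all sufficiently small ample $\Q$-divisors $A$ (with $D-A$ still pseudoeffective). The implication $(2)\Rightarrow(3)$ is trivial, so only the remaining two directions need attention, both of which I will argue in contrapositive form.

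For the implication $(3)\Rightarrow(1)$, assume $x\notin\bp(D)$ and choose an ample $\Q$-divisor $A$ with $x\notin\bm(D-A)$. Theorem \ref{main1} applied to $D-A$ gives that for every admissible flag $Y_\bullet$ centered at $x$, the origin of $\R^n$ lies in $\oklim_{Y_\bullet}(D-A)$. Combine this with two ingredients: (i) the Minkowski-type inclusion
\[
\oklim_{Y_\bullet}(D-A)+\okbd_{Y_\bullet}(A)\;\subseteq\;\oklim_{Y_\bullet}(D),
\]
valid for an arbitrary admissible flag and an ample $A$ (established in \cite{CHPW}); and (ii) the fact that since $A$ is ample, $\okbd_{Y_\bullet}(A)$ contains a small standard simplex at the origin, and in particular a subset of the form $U_{\geq 0}$ for some open neighborhood $U$ of the origin of $\R^n$. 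Together these yield $U_{\geq 0}\subseteq\oklim_{Y_\bullet}(D)$ for \emph{every} admissible flag centered at $x$, which negates the stronger condition $(2)$ and a fortiori $(3)$.

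The implication $(1)\Rightarrow(2)$ is the main obstacle. In contrapositive: given $U_{\geq 0}\subseteq\oklim_{Y_\bullet}(D)$ for some admissible flag $Y_\bullet$ centered at $x$ and some neighborhood $U$ of the origin, I must manufacture an ample $\Q$-divisor $A$ with $x\notin\bm(D-A)$, so that $x\notin\bp(D)$ follows. Minkowski-subtraction is not directly available. Instead I would fix a very ample divisor $H$, consider the family $D_t:=D-tH$ for small rational $t>0$, and use the continuity of $t\mapsto\oklim_{Y_\bullet}(D_t)$ established in \cite{CHPW} to show that the hypothesis $U_{\geq 0}\subseteq\oklim_{Y_\bullet}(D)$ survives small perturbations, forcing $0\in\oklim_{Y_\bullet}(D_t)$ for all sufficiently small $t>0$. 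Theorem \ref{main1} applied to $D_t$ then gives $x\notin\bm(D-tH)$, so $x\notin\bp(D)$ as required. The delicate point is the stability of the containment of the origin in the limiting Okounkov body under small ample perturbations; an alternative route, suggested by the abstract, is to pass through the moving Seshadri constant via the equivalence $x\notin\bp(D)\iff\eps(\|D\|;x)>0$ and the forthcoming simplex-size bounds for $\oklim_{Y_\bullet}(D)$ at the origin.
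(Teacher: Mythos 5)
Your argument for $(3)\Rightarrow(1)$ is essentially the paper's: pick an ample $A$ with $x\notin\bm(D-A)$, apply Theorem~\ref{main1} to place the origin in $\oklim_{Y_\bullet}(D-A)$, and add on $\okbd_{Y_\bullet}(A)\supseteq U_{\geq0}$ via the Minkowski inclusion of Lemma~\ref{lem-okbd sum}. Be aware, though, that the ingredient you label as a ``fact'' --- that $\okbd_{Y_\bullet}(A)$ contains a set $U_{\geq 0}$ for an \emph{arbitrary} admissible flag, not just one with $Y_1$ ample --- is itself nontrivial; it is Proposition~\ref{prop-ampleness}, which the paper proves first by induction on dimension precisely so that it can be quoted here.

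The implication $(1)\Rightarrow(2)$ is where your proposal has a genuine gap. Your primary mechanism --- continuity of $t\mapsto\oklim_{Y_\bullet}(D-tH)$ forcing $0\in\oklim_{Y_\bullet}(D-tH)$ for small $t>0$ --- does not work: Hausdorff continuity of convex bodies does not preserve membership of a boundary point of the orthant. Already in dimension one, $[t,1]\to[0,1]$ in the Hausdorff metric while $0\notin[t,1]$ for every $t>0$; and this is exactly the behavior that occurs here, since for $x\in\bp(D)\setminus\bm(D)$ the origin lies in $\oklim_{Y_\bullet}(D)$ but, by Theorem~\ref{main1}, not in $\oklim_{Y_\bullet}(D-tH)$ for any $t>0$. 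So convex-geometric continuity cannot distinguish the case you must rule out. What is actually needed is geometric input: the paper proves (Theorem~\ref{thrm-movSesha}) that for $x\in\bp(D)\setminus\bm(D)$ and any $\eps>0$ there is a birational model $f:\widetilde X\to X$, isomorphic near $x$, and a curve $C$ through $x'=f^{-1}(x)$ with $P\cdot C/\mult_{x'}C<\eps$, where $P$ is the positive part of $f^*D$; one then uses $\okbd_{Y_\bullet}(D)=\okbd_{\widetilde Y_\bullet}(P)$ (Lemma~\ref{okbdzdlem}), Lemma~\ref{easylem} to exclude $(\eps,0,\dots,0)$ from the relevant restricted body when $C$ is not a flag element, and the surface description of Okounkov bodies via Zariski decomposition (\cite[Theorem 6.4]{lm-nobody}) when $C=\widetilde Y_{n-1}$, to contradict the containment of a standard simplex. (One must also dispose separately of the case where some $D|_{Y_k}$ fails to be big.) Your ``alternative route'' through moving Seshadri constants does point in the right direction, but it is only a gesture, and as the paper is organized the simplex-size bounds of Theorem~\ref{main3} are themselves deduced from Theorem~\ref{main2}, so invoking them here would be circular unless you reprove the lower bound independently.
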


Note that a divisor $D$ is ample if and only if $\bp(D)=\emptyset$. Thus we immediately obtain the following ampleness criterion of divisors.

\begin{corollaryalpha}[=Corollary \ref{cor-ampleness}]\label{maincor2}
Let $D$ be a divisor on a smooth projective variety $X$ of
dimension $n$. Then the following are equivalent.
\begin{enumerate}
 \item[$(1)$] $D$ is ample.

 \item[$(2)$] For any admissible flag $Y_\bullet$, the subset $U_{\geq0}$ of $\R^n_{\geq 0}$ is contained in $\okbd_{Y_\bullet}(D)$ for some small open neighborhood $U$ of the origin of $\R^n$.

 \item[$(3)$] For any point $x \in X$, there exists an admissible flag $Y_\bullet$ centered at $x$ such that the subset $U_{\geq0}$ of $\R^n_{\geq 0}$ is contained in $\okbd_{Y_\bullet}(D)$ for some small open neighborhood $U$ of the origin of $\R^n$.
\end{enumerate}
\end{corollaryalpha}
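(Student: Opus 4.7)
The plan is to deduce this corollary as a direct consequence of Theorem \ref{main2}, applied pointwise on $X$, combined with two standard facts: that a divisor $D$ is ample if and only if $\bp(D)=\emptyset$, and that for a big divisor $D$ one has $\oklim_{Y_\bullet}(D)=\okbd_{Y_\bullet}(D)$ for every admissible flag $Y_\bullet$, as recalled in the introduction.

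First, I would rewrite the ampleness criterion as the pointwise condition ``$x\notin\bp(D)$ for every $x\in X$'', and then translate each occurrence of $x\notin\bp(D)$ using the contrapositive of Theorem \ref{main2}. Explicitly, negating the equivalence (1)$\Leftrightarrow$(3) of Theorem \ref{main2} gives: $x\notin\bp(D)$ if and only if there exists an admissible flag $Y_\bullet$ centered at $x$ and a small open neighborhood $U$ of the origin with $U_{\geq 0}\subset\oklim_{Y_\bullet}(D)$. Since $D$ is big, $\oklim_{Y_\bullet}(D)=\okbd_{Y_\bullet}(D)$, and letting $x$ range over all points of $X$ yields exactly condition (3) of the corollary. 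Thus (1)$\Leftrightarrow$(3).

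For (1)$\Leftrightarrow$(2), I would instead negate the equivalence (1)$\Leftrightarrow$(2) of Theorem \ref{main2}, obtaining: $x\notin\bp(D)$ if and only if for \emph{every} admissible flag $Y_\bullet$ centered at $x$, there is a small neighborhood $U$ of the origin with $U_{\geq 0}\subset\oklim_{Y_\bullet}(D)=\okbd_{Y_\bullet}(D)$. Quantifying over all $x\in X$, and noting that every admissible flag $Y_\bullet$ on $X$ is centered at some point, recovers condition (2). So the three conditions are equivalent.

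There is no real obstacle beyond bookkeeping of the quantifiers: the subtle point is recognizing that the ``for any flag'' version (2) comes from negating the ``for some flag'' statement (3) of Theorem \ref{main2}, and vice versa for (3) of the corollary. Once Theorem \ref{main2} and the equality $\oklim_{Y_\bullet}(D)=\okbd_{Y_\bullet}(D)$ for big $D$ are in hand, the argument is purely formal.
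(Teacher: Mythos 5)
Your quantifier bookkeeping is correct: if Theorem~\ref{main2} is available as a black box, then negating its conditions (2) and (3) and letting $x$ range over $X$ does yield the corollary, and the identification $\oklim_{Y_\bullet}(D)=\okbd_{Y_\bullet}(D)$ for big $D$ is legitimate. The problem is that in this paper the deduction runs in the opposite direction, so your argument is circular. The paper proves Proposition~\ref{prop-ampleness} \emph{first}, and then uses it in an essential way in the proof of Theorem~\ref{thrm-B+criterion}: in the direction $(3)\Rightarrow(1)$ of that theorem one writes $\bp(D)=\bm(D-\eps A)$ for a small ample $\eps A$, applies Theorem~\ref{thrm-B-criterion} to put the origin in $\okbd_{Y_\bullet}(D-\eps A)$, and then invokes Proposition~\ref{prop-ampleness} to know that $\okbd_{Y_\bullet}(\eps A)$ contains a set $U_{\geq 0}$, concluding via the Minkowski-sum inclusion of Lemma~\ref{lem-okbd sum}. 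So the precise direction of Theorem~\ref{main2} that you negate to get ``$x\notin\bp(D)$ implies $U_{\geq 0}\subseteq\okbd_{Y_\bullet}(D)$'' is itself established in the paper only by means of the corollary you are trying to prove. Unless you supply an independent proof of that direction of Theorem~\ref{main2}, your proposal contains no actual argument for the statement.

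The substantive content you are missing is exactly what the paper's direct proof supplies. For $(1)\Rightarrow(2)$ it argues by induction on $\dim X$: ampleness of $D-\eps Y_1$ gives a point $(\eps',0,\dots,0)\in\okbd_{Y_\bullet}(D)$ via the slice formula $\okbd_{Y_\bullet}(D)_{x_1=\eps}=\okbd_{Y_{1\bullet}}(D-\eps Y_1)$, and the induction hypothesis applied to an ample restriction to $Y_1$ (after a resolution making the flag smooth) produces a simplex in the slice $x_1=0$; convexity then yields a full simplex, hence a neighborhood $U_{\geq 0}$. For $(3)\Rightarrow(1)$ it uses Seshadri's ampleness criterion to produce curves $C_i$ through some $x$ with $D\cdot C_i/\mult_x C_i\to 0$ and derives a contradiction with the presence of a simplex in $\okbd_{Y_\bullet}(D)$ via Lemma~\ref{easylem} (and, in the degenerate case $\widetilde{C}_i=\widetilde{Y}_{n-1}$, the surface description of Okounkov bodies through Zariski decomposition). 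None of this is ``purely formal,'' and it is the reason the authors prove the proposition before, not after, Theorem~\ref{main2}.
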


To prove Theorem \ref{main2}, we use some results on the moving Seshadri constants  (see Subsection \ref{movsessubsec} for the definition), slices of Okounkov bodies (\cite[Theorem 1.1]{CPW2}), and a version of the Fujita approximation (\cite[Proposition 3.7]{lehmann-nu}).

We can also give both lower and upper bounds for the moving Seshadri constants of pseudoeffective divisors by analyzing the structure of the limiting Okounkov bodies.
A \emph{simplex in $\R^n_{\geq0}$ of length $\lambda=(\lambda_1, \cdots,\lambda_n)$} is a convex set defined as
$$\blacktriangle_\lambda:=
\left\{(x_1,\cdots,x_n)\in\R^n_{\geq0}\left| \; \frac{x_1}{\lambda_1}+\cdots+\frac{x_n}{\lambda_n}\leq 1\right.\right\}
$$
where $\lambda_i \geq 0$ ($1\leq i\leq n$) are nonnegative real numbers and we let $x_i=0$ for $i$ such that $\lambda_i=0$.
If $x \not\in \bm(D)$, then the origin of $\R^n$ is contained in $\oklim_{Y_\bullet}(D)$ for any admissible flag $Y_\bullet$ by Theorem \ref{main1}. Thus $\blacktriangle_{\lambda}\subseteq\oklim_{Y_\bullet}(D)$ for some length $\lambda$. For $x\not\in\bm(D)$ and an admissible flag $Y_\bullet$ centered at $x$, we consider the \emph{maximal sub-simplex} $\blacktriangle_{\max}$ contained in $\oklim_{Y_\bullet}(D)$.
If the simplex $\blacktriangle_{\max}$ has length $(\lambda_1,\cdots,\lambda_n)$, then we denote
$\lambda_i(D;x,Y_\bullet):=\lambda_i$ and $\lambda_{\min}(D;x,Y_\bullet):=\min_i\{\lambda_i\;|\;i=1,\cdots,n\}$.
If $x\in\bm(D)$ so that the origin is not contained in $\oklim_{Y_\bullet}(D)$, then we define $\blacktriangle_{\max}$ to be the empty set and $\lambda_i=0$ for every $i$.

\begin{theoremalpha}[=Theorem \ref{movsesthm}]\label{main3}
Let $D$ be a pseudoeffective divisor on a smooth projective variety $X$ of dimension $n$, and $x$ be a point on $X$. Then we have
$$
\sup_{Y_\bullet}\{\lambda_{\min}(D;x,Y_\bullet)\} \leq \eps(||D||;x) \leq \inf_{Y_\bullet}\{\lambda_n(D;x,Y_\bullet)\}
$$
where $\sup$ and $\inf$ are taken over all the admissible flags $Y_\bullet$ centered at $x$.
\end{theoremalpha}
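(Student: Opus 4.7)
The plan is to establish both inequalities after first reducing to the case where $D$ is big and $x\notin\bp(D)$. When $x\in\bm(D)$, Theorem \ref{main1} yields $\lambda_{\min}(D;x,Y_\bullet)=0$ for every admissible flag, and one checks that $\eps(||D||;x)=0$ as well, so the asserted inequalities are trivial. When $D$ is pseudoeffective but not big, we perturb to $D+\tfrac{1}{k}A$ with $A$ ample, exploit the continuity of both the limiting Okounkov body and the moving Seshadri constant under such perturbations, and let $k\to\infty$. We may therefore work with the classical Okounkov body $\okbd_{Y_\bullet}(D)=\oklim_{Y_\bullet}(D)$ for the remainder.

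For the upper bound $\eps(||D||;x)\le\lambda_n(D;x,Y_\bullet)$, we fix local coordinates $(u_1,\ldots,u_n)$ at $x$ adapted to the flag, so $Y_i=\{u_1=\cdots=u_i=0\}$ near $x$. Using the jet-separation characterization of the moving Seshadri constant, for every $\delta>0$ and $m\gg0$ the map $H^0(X,mD)\to H^0(X,mD\otimes\mathcal{O}_X/\mathfrak m_x^{k+1})$ is surjective with $k:=\lfloor m(\eps(||D||;x)-\delta)\rfloor$. Choose $s_m\in H^0(X,mD)$ whose $k$-jet at $x$ equals the pure monomial $u_n^{k}$. A direct induction along the flag then computes $\nu_{Y_\bullet}(s_m)=(0,\ldots,0,k)$, so that the point $(0,\ldots,0,\eps(||D||;x)-\delta)$ lies in $\okbd_{Y_\bullet}(D)$ in the limit $m\to\infty$. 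This forces $\lambda_n(D;x,Y_\bullet)\ge\eps(||D||;x)-\delta$; letting $\delta\to0$ concludes.

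The lower bound $\lambda_{\min}(D;x,Y_\bullet)\le\eps(||D||;x)$ is the principal difficulty. Write $\lambda:=\lambda_{\min}(D;x,Y_\bullet)$ and let $\mu:\widetilde X\to X$ be the blowup at $x$ with exceptional divisor $E$. By the birational characterization of the moving Seshadri constant it suffices to prove, for every $t<\lambda$, that $E\not\subset\bp(\mu^*D-tE)$. By Theorem \ref{main2} applied on $\widetilde X$, this is equivalent to producing an admissible flag $\widetilde Y_\bullet$ centered at some point of $E$ for which a neighborhood $U_{\ge0}$ of the origin lies in $\oklim_{\widetilde Y_\bullet}(\mu^*D-tE)$. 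We construct $\widetilde Y_\bullet$ by lifting $Y_\bullet$ and interlacing it with $E$, then use the identification of $H^0(\widetilde X, m(\mu^*D-tE))$ with the subspace of $H^0(X,mD)$ consisting of sections vanishing to order at least $\lceil mt\rceil$ at $x$ to transfer the inclusion $\blacktriangle_\lambda\subset\okbd_{Y_\bullet}(D)$ into the required neighborhood on $\widetilde X$. The technical heart is tracking the linear change between the two valuation schemes and verifying that the simplex $\blacktriangle_\lambda$ carries enough data, uniformly in $t<\lambda$, to produce an open neighborhood of the origin on $\widetilde X$ after subtracting $tE$.
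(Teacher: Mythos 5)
Your upper bound argument is fine and is genuinely different from the paper's: you use the jet-separation description of $\eps(\|D\|;x)$ (surjectivity of $H^0(X,mD)\to H^0(X,mD\otimes \mathcal{O}_X/\mathfrak{m}_x^{k+1})$ for $k\approx m(\eps(\|D\|;x)-\delta)$) to manufacture a section with valuation vector $(0,\ldots,0,k)$, whereas the paper reduces to the nef case (Theorem \ref{sesthm}), where $\lambda_n=\vol_{X|Y_{n-1}}(D)=D\cdot Y_{n-1}$ and the bound comes from testing against the curve $Y_{n-1}$. Your route imports \cite[Prop.~6.6]{elmnp-restricted vol and base loci} but is otherwise correct. (Minor point on the reduction: you should dispose of the whole case $x\in\bp(D)$, not only $x\in\bm(D)$; when $x\in\bp(D)\setminus\bm(D)$ one still has $\eps(\|D\|;x)=0$ by definition and $\lambda_{\min}=0$ by Theorem \ref{main2}, and the non-big case is then automatic since $\bp(D)=X$.)

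The lower bound, which you correctly identify as the principal difficulty, has a genuine gap. First, the ``birational characterization'' you invoke is false as stated: $\eps(\|D\|;x)$ is \emph{not} $\sup\{t: E\not\subseteq\bp(\mu^*D-tE)\}$. Already for an ample divisor on a surface with a submaximal Seshadri curve $C$ (so $\eps(D;x)=\tfrac{D\cdot C}{\mult_xC}<\sqrt{D^2}$), for $t$ slightly above $\eps(D;x)$ the strict transform $\widetilde C$ enters the negative part of $\mu^*D-tE$ while the positive part still meets $E$ positively, so $E\not\subseteq\bp(\mu^*D-tE)$ even though $t>\eps(D;x)$. The correct infinitesimal characterization requires $\bp(\mu^*D-tE)\cap E=\emptyset$, i.e.\ control at \emph{every} point of $E$, which a single flag through one point of $E$ (via Theorem \ref{main2}) cannot deliver. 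Second, even granting a correct characterization, the ``transfer'' you defer to is exactly the missing argument: the valuation vectors upstairs with respect to a flag interlaced with $E$ involve $\mult_x$ and vanishing orders along subvarieties of $E$ (tangent-cone data), none of which is determined by $\okbd_{Y_\bullet}(D)$ on $X$ --- this is precisely why the paper warns that exact Seshadri constants require \emph{infinitesimal} Okounkov bodies. The paper avoids all of this: it proves $\lambda_{\min}\leq \frac{D\cdot C}{\mult_xC}$ for nef $D$ directly, by splitting into $C\not\subseteq Y_1$ (where $(\lambda_{\min},0,\ldots,0)\in\okbd_{Y_\bullet}(D)$ forces $(D-\lambda_{\min}Y_1)\cdot C\geq 0$ via Lemma \ref{easylem}) and $C\subseteq Y_1$ (induction on dimension), and then passes to general $D$ by Fujita approximation of Okounkov bodies (Lemma \ref{lem-lambda_min}), which makes $\lambda_{\min}(A;x',\widetilde Y_\bullet)$ approximate $\lambda_{\min}(D;x,Y_\bullet)$ for an ample $A$ on a modification. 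You would need to either adopt this reduction or supply a correct infinitesimal argument; as written the lower bound does not go through.
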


We prove Theorem \ref{main3} by basically reducing our statement to the case of nef divisors (Theorem \ref{sesthm}) using a version of the Fujita approximation (\cite[Proposition 3.7]{lehmann-nu}).

The equalities in Theorem \ref{main3} hold for some cases (see Example \ref{equality}), and there are  nontrivial examples that the Seshadri constant is computed by the Okounkov bodies (see Remark \ref{gromov}). Note also that both inequalities in Theorem \ref{main3} can be strict in general (see Example \ref{strineqrem}). It is too much to expect to obtain the exact values of the moving Seshadri constants by only considering the Okounkov bodies on $X$.
On the other hand, one can obtain the exact values by using the infinitesimal Okounkov bodies (see \cite[Remark 5.5]{lm-nobody}, \cite[Theorem C]{AV-loc pos3}). However, computing the infinitesimal Okounkov bodies is quite difficult in general. Moreover, it is already very interesting to give some bounds for moving Seshadri constants using the Okounkov bodies only (cf. \cite{I}, \cite{AV-loc pos}).

Our main results generalize main results in \cite{AV-loc pos} into higher dimensions.
We remark that K\"{u}ronya and Lozovanu also independently obtained Theorem \ref{main1} and Corollary \ref{maincor1} in \cite{AV-loc pos2} when the divisor $D$ is big. They also showed Theorem \ref{main2} and Corollary \ref{maincor2} under a strong assumption that the divisor $Y_1$ is ample.
The main ingredient of \cite{AV-loc pos} and \cite{AV-loc pos2} is the (divisorial) Zariski decomposition.  Instead, in this paper, we develop a new approach based on a version of the Fujita approximation (\cite[Proposition 3.7]{lehmann-nu}) and results on the moving Seshadri constants in \cite{elmnp-restricted vol and base loci}. This leads us to overcome many technical difficulties, and so our results do not require any condition on the admissible flags $Y_\bullet$ and extend to the pseudoeffective case as well.

The organization of the paper is as follows. We start in Section \ref{prelimsec} by collecting basic facts on the asymptotic base loci, asymptotic valuations, divisorial Zariski decompositions, restricted volumes, and moving Seshadri constants. In Section \ref{oksec}, we review the construction and basic properties of limiting Okounkov bodies. In the next two sections, we study the asymptotic properties of divisors via limiting Okounkov bodies. We give the proofs of Theorem \ref{main1} and Corollary \ref{maincor1} in Section \ref{restsec}, and we calculate the divisorial Zariski decomposition via the limiting Okounkov bodies in Section \ref{divzdsec}. We then turn to the augmented base loci and moving Seshadri constants. In Section \ref{augsec}, we show Theorem \ref{main2} and Corollary \ref{maincor2}. Section \ref{movsessec} is devoted to proving Theorem \ref{main3}.

\subsection*{Acknowledgment}
We would like to thank the referee for helpful and valuable suggestions and comments.

\section{Preliminaries}\label{prelimsec}

In this section, we recall basic notions and properties which we use later on.
By a \emph{variety}, we mean an irreducible connected projective variety defined over the field $\mathbb C$ of complex numbers.
Unless otherwise stated, a \emph{divisor} means an $\R$-Cartier divisor. A divisor $D$ is \emph{pseudoeffective} if its numerical equivalence class $[D] \in N^1(X)_{\R}$ lies in the pseudoeffective cone $\ol\Eff(X)$, the closure of the cone spanned by effective divisor classes.
A divisor $D$ on a variety $X$ is \emph{big} if $[D]$ lies in the interior $\text{Big}(X)$ of $\ol\Eff(X)$.
Throughout the paper, $X$ is a smooth projective variety of dimension $n$.

\subsection{Asymptotic base loci}\label{asybasubsec}
We will define the asymptotic base loci of divisors which will be used throughout the paper.
The \emph{stable base locus} $\text{SB}(D)$ of a $\Q$-divisor $D$ on $X$ is defined as
$$
\text{SB}(D):=\bigcap_{m \geq 0} \text{Bs}(|mD|)
$$
where the intersection is taken over the positive integers $m$ such that $mD$ are $\Z$-divisors.
One can also define the stable base locus $\text{SB}(D)$ of an $\R$-divisor as
$$
\text{SB}(D):=\bigcap_{D' \geq 0} \Supp(D')
$$
where the intersection is taken over all effective divisors $D'$ such that $D' \sim_{\R} D$. If $D$ is not effective, then $\text{SB}(D)=X$.
We recall that $\text{SB}(D)$ is not a numerical property of $D$ (see \cite[Example 10.3.3]{pos}).
However, the following asymptotic base loci which are defined for $\R$-divisors $D$ depend only on the numerical class $[D]\in N^1(X)_{\R}$.

\begin{definition}
Let $D$ be a divisor on $X$.
The \emph{restricted base locus} $\bm(D)$ of $D$ is defined as
$$
\bm(D):=\bigcup_A \text{SB}(D+A)
$$
where the union is taken over all ample divisors $A$ such that $D+A$ are $\Q$-divisors.
The \emph{augmented base locus} $\bp(D)$ is defined as
$$
\bp(D):=\bigcap_ A \text{SB}(D-A)
$$
where the intersection is taken over all ample divisors $A$ such that $D-A$ are $\Q$-divisors.
\end{definition}

We recall that $D$ is nef if and only if $\bm(D)=\emptyset$, and $D$ is ample if and only if $\bp(D)=\emptyset$. It is also easy to see that $D$ is not pseudoeffective if and only if $\bm(D)=X$, and $D$ is not big if and only if $\bp(D)=X$.
It is also well known that $\bm(D)$ and $\bp(D)$ do not contain any isolated points.
For more details on the asymptotic base loci, we refer to \cite{pos}, \cite{elmnp-asymptotic inv of base} and \cite{elmnp-restricted vol and base loci}.

\subsection{Asymptotic valuations}\label{asyvalsubsec}

Let $\sigma$ be a divisorial valuation of $X$, and $V:=\Cent_X \sigma$ be its center on $X$.
If $D$ is a big divisor on $X$,  we define \emph{the asymptotic valuation} of $\sigma$ at $D$ as
$$
\ord_V(||D||):=\inf\{\sigma(D')\mid D\equiv D'\geq 0\}.
$$
If $D$ is only a pseudoeffective divisor on $X$, we define
$$
\ord_V(||D||):=\lim_{\epsilon \to 0+}\ord_V(||D+\epsilon A||)
$$
for some ample divisor $A$ on $X$.
This definition is independent of the choice of $A$, and the number $\ord_V(||D||)$ depends only on the numerical class $[D]\in N^1(X)_\R$. Note that $V\subseteq\bm(D)$ if and only if $\ord_V(||D||)>0$ (see \cite[Proposition 2.8]{elmnp-asymptotic inv of base},\cite[V.1.9 Lemma]{nakayama}).
For more details, we refer to \cite{elmnp-asymptotic inv of base} and \cite{nakayama}.

\subsection{Divisorial Zariski decompositions}\label{divzdsubsec}

Let $D$ be a pseudoeffective divisor on $X$.

\begin{definition}
The \emph{divisorial Zariski decomposition} of $D$ is the decomposition
$$
D=P+N
$$
where the \emph{negative part} $N$ of $D$ is defined as
$$
N=\sum_{\codim E=1}  \ord_E(||D||)E
$$
where the summation is over the codimension $1$ irreducible subvariety $E$ of $X$ such that $ \ord_E(||D||)>0$ and the \emph{positive part} $P$ of $D$ is defined as $P:=D-N$.
\end{definition}

It is well known that the summation for the negative part $N$ is finite and the components of $N$
are linearly independent in $N^1(X)_\R$.
Furthermore, the positive part is movable, that is, $\bm(D)$ has no divisorial components.
For more details, see \cite{B} and \cite[Chapter III]{nakayama}.

\subsection{Restricted volumes}\label{resvolsubsec}
Let $D$ be a $\Q$-divisor on $X$, and $V$ be a $v$-dimensional proper subvariety of $X$ such that $V\not\subseteq\bp(D)$.
The \emph{restricted volume} of $D$ along $V$ is defined as
$$
\vol_{X|V}(D):=\limsup_{m \to \infty} \frac{h^0(X|V,mD)}{m^v/v!}
$$
where $h^0(X|V,mD)$ is the dimension of the image of the natural restriction map $\varphi:H^0(X,\mc O_X(mD))\to H^0(V,\mc O_V(mD))$ (\cite[Definition 2.1]{elmnp-restricted vol and base loci}).
As in the case with the volume function, the restricted volume $\vol_{X|V}(D)$ depends only on the numerical class of $D$, and it extends uniquely to a continuous function
$$
\vol_{X|V} : \text{Big}^V (X) \to \R
$$
where $\text{Big}^V(X)$ is the set of all $\R$-divisor classes $\xi$ such that $V$ is not properly contained in any irreducible component of $\bp(\xi)$.
By \cite[Theorem 5.2]{elmnp-restricted vol and base loci}, if $V$ is an irreducible component of $\bp(D)$, then $\vol_{X|V}(D)=0$. When $V=X$, then we recover the original volume function: $\vol_{X|X}(D)=\vol_X(D)$ for any divisor $D$.
Thus $\vol_X(D)=0$ holds when $D$ is not big.
For more details, see \cite{elmnp-restricted vol and base loci}.

\subsection{Moving Seshadri constants}\label{movsessubsec}
We first recall the definition of the Seshadri constant of a nef divisor at a point.

\begin{definition}
Let $D$ be a nef divisor on $X$. Then the \emph{Seshadri constant} $\eps(D;x)$ of $D$ at a point $x$ on $X$ is defined as
$$
\eps(D;x):=\sup \{s \mid f^*D-sE \text{ is nef} \}
$$
where $f : \widetilde{X} \to X$ is the blow-up of $X$ at $x$ with the exceptional divisor $E$.
\end{definition}

We now let
$$
\eps'(D;x) := \inf_{C\ni x} \left\{ \frac{D\cdot C}{\mult_x C} \right\}
$$
where $\inf$ runs over all irreducible curves $C$ passing through $x$.
It is well known that when $D$ is nef, $\eps(D;x)=\eps'(D;x)$ (\cite[Proposition 5.1.5]{pos}).
Furthermore, by the Seshadri's ampleness criterion (\cite[Theorem 1.4.13]{pos}), a divisor $D$ on a variety $X$ is ample if and only if $\inf_{x\in X}\eps' (D;x)>0$.
Thus for a nef divisor $D$, the Seshadri constant $\eps(D;x)$ measures the local positivity of $D$ at $x$.
For more details, we refer to \cite[Chapter 5]{pos}.

For pseudoeffective divisors, Nakamaye (\cite{nakamaye}, see also \cite{elmnp-restricted vol and base loci}) defined the following measurement.

\begin{definition}\label{def-movSesha}
Let $D$ be a pseudoeffective divisor on $X$. If $x \not\in \bp(D)$ (which implies that $D$ is big), then the \emph{moving Seshadri constant} $\eps(||D||;x)$ of $D$ at a point $x$ on $X$ is defined as
$$
\eps(||D||;x):=\sup_{f^*D = A+E} \eps(A; x)
$$
where the $\sup$ runs over all birational morphisms $f : \widetilde{X} \to X$ with $\widetilde{X}$ smooth, that are isomorphic over a neighborhood of $x$, and decompositions $f^*D = A+E$ with an ample divisor  $A$ and an effective divisor $E$ such that $f^{-1}(x)$ is not in the support of $E$. If $x \in \bp(D)$, then we simply let $\eps(||D||;x)=0$.
\end{definition}
If $D$ is nef, then $\eps(||D||;x)=\eps(D;x)$.
Note that $\eps(||D||;x)$ depends only on the numerical class of $D$. Furthermore, by \cite[Theorem 6.2]{elmnp-restricted vol and base loci}, for any fixed point $x$ of a variety $X$, the map $D \mapsto \eps(||D||;x)$ defines a continuous function on the entire N\'{e}ron-Severi space $N^1(X)_{\R}$.
For more details, we refer to \cite{elmnp-restricted vol and base loci}.

\section{Construction and basic properties of Okounkov bodies}\label{oksec}

In this section, we first explain the construction of Okounkov bodies in \cite{lm-nobody}, \cite{KK}
and limiting Okounkov bodies in \cite{CHPW} and review some of their basic properties.
Throughout this section, we fix an \emph{admissible flag} $Y_\bullet$ on a smooth projective variety $X$ of dimension $n$, which is defined as a sequence of irreducible subvarieties $Y_i$ of $X$ such that
$$
Y_\bullet: X=Y_0\supseteq Y_1\supseteq\cdots \supseteq Y_{n-1}\supseteq Y_n=\{x\}
$$
where each $Y_i$ is of codimension $i$ in $X$ and is smooth at $x$.
We denote the $\R$-linear system of a divisor $D$ by
$$
|D|_{\R}:=\{D' \mid D\sim_\R D'\geq 0\}.
$$

Let us first consider a big divisor $D$ on $X$.
For a given admissible flag $Y_\bullet$, we define a valuation-like function
\begin{equation}\label{sp}
\nu_{Y_\bullet}:|D|_{\R}\to \R_{\geq0}^n
\end{equation}
as follows.
By possibly replacing $X$ by an open subset, we may suppose that each $Y_{i+1}$ is a Cartier divisor on $Y_i$.
For $D'\in |D|_\R$, let
$$\nu_1=\nu_1(D'):=\ord_{Y_1}(D').$$
Since $D'-\nu_1(D')Y_1$ is also effective, we can define
$$\nu_2=\nu_2(D'):=\ord_{Y_2}((D'-\nu_1Y_1)|_{Y_1}).$$
Once $\nu_i=\nu_i(D')$ is defined, we define $\nu_{i+1}=\nu_{i+1}(D')$ inductively as
$$\nu_{i+1}(D'):=\ord_{Y_{i+1}}((\cdots((D'-\nu_1Y_1)|_{Y_1}-\nu_2Y_2)|_{Y_2}-\cdots-\nu_iY_i)|_{Y_{i}}).$$
By collecting the values $\nu_i(D')$, we can define a function $\nu_{Y_\bullet}$ in (\ref{sp}) as
$$
\nu(D')=(\nu_1(D'),\nu_2(D'),\cdots,\nu_n(D')).
$$

\begin{remark}\label{rem-nu<ord}
By definition, it is easy to see that for any $D'\in|D|_\R$, we have
$$\nu_i(D')\leq \ord_{Y_i}(D').$$
\end{remark}

\begin{definition}\label{def-okbd}
The \emph{Okounkov body} $\okbd_{Y_\bullet}(D)$ of a big divisor $D$ with respect to an admissible flag $Y_\bullet$ is a closed convex subset of $\R_{\geq 0}^n$ defined as follows:
\begin{equation}
\okbd_{Y_\bullet}(D):=\text{ the closure of the convex hull of }\nu_{Y_\bullet}(|D|_{\R})\subseteq\R_{\geq0}^n.
\end{equation}
The \emph{limiting Okounkov body} $\oklim_{Y_\bullet}(D)$ of a pseudoeffective divisor $D$ with respect to an admissible flag $Y_\bullet$ is defined as
$$
\oklim_{Y_\bullet}(D):=\bigcap_{\epsilon>0} \Delta_{Y_\bullet}(D+\epsilon A)
$$
where $A$ is an ample divisor on $X$.
If $D$ is not pseudoeffective, we simply put $\oklim_{Y_\bullet}(D):=\emptyset$.
\end{definition}

It is easy to see that the limiting Okounkov body $\oklim_{Y_\bullet}(D)$ is also a closed convex subset of $\R^n$. Note also that if $D$ is big, then $\okbd_{Y_\bullet}(D)=\oklim_{Y_\bullet}(D)$ by the continuity of $\okbd_{Y_\bullet}(D)$
(\cite[Theorem B]{lm-nobody}).
For this reason, we will simply use the notation $\okbd_{Y_\bullet}(D)$ instead of $\oklim_{Y_\bullet}(D)$ when $D$ is big.

\begin{remark}\label{valuation}
Instead of working with admissible flags, one can also construct the Okounkov body $\okbd_{\nu}(D)$ of a big divisor $D$ on $X$ with respect to a valuation $\nu$ of the function field $\C(X)$ of rank $n$ as in \cite{KK}. However, by \cite[Proposition 2.8 and Theorem 2.9]{CFKLRS}, for every such a valuation $\nu$ of $\C(X)$, one can find a birational morphism $f : \widetilde{X} \to X$ and an admissible flag $Y_\bullet$ on $\widetilde{X}$ such that $\okbd_{\nu}(D)=\okbd_{Y_\bullet}(f^*D)$.
\end{remark}

\begin{lemma}\label{birokbd}
Let $D$ be a pseudoeffective divisor on $X$. Consider a birational morphism $f : \widetilde{X} \to X$ with $\widetilde{X}$ smooth and an admissible flag
$$
\widetilde{Y}_\bullet : \widetilde{X}=\widetilde{Y}_0 \supseteq \widetilde{Y}_1 \supseteq \cdots \supseteq \widetilde{Y}_{n-1} \supseteq \widetilde{Y}_n=\{ x' \}
$$
on $\widetilde{X}$. Suppose that $f$ is isomorphic over $f(x')$ and
$$
Y_\bullet:=f(\widetilde{Y}_\bullet) : X=f(\widetilde{Y}_0) \supseteq f(\widetilde{Y}_1) \supseteq \cdots \supseteq f(\widetilde{Y}_{n-1}) \supseteq f(\widetilde{Y}_n)=\{ f(x') \}
$$ is an admissible flag on $X$. Then we have
$\oklim_{\widetilde{Y}_\bullet}(f^*D) = \oklim_{Y_\bullet}(D).$
\end{lemma}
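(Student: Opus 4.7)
The plan is to reduce to the case of big divisors, establish the equality there by matching the Okounkov valuations on both sides of $f$, and then extend to the pseudoeffective case via the limiting definition of $\oklim$.

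\emph{Step 1: the big case.} Suppose first that $D$ is big, so $f^*D$ is big as well. Since $f$ is birational between smooth varieties, pullback induces a bijection between effective members of $|D|_\R$ and $|f^*D|_\R$: given $\widetilde D' \in |f^*D|_\R$ effective, its pushforward $D' := f_*\widetilde D'$ lies in $|D|_\R$ and satisfies $f^*D' = \widetilde D'$. The plan is to verify that $\nu_{Y_\bullet}(D') = \nu_{\widetilde Y_\bullet}(f^*D')$ under this bijection. The geometric input is that $f$ is an isomorphism in a neighborhood of $x = f(x')$, and the codimension condition on $\widetilde Y_\bullet$ forces each $\widetilde Y_i$ to be non-exceptional, hence to map birationally onto $Y_i$. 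Choosing local coordinates $z_1,\dots,z_n$ at $x$ adapted to the flag (so that $Y_i = \{z_1=\cdots=z_i=0\}$ locally), the pulled-back functions $f^*z_1,\dots,f^*z_n$ are local coordinates at $x'$ adapted to $\widetilde Y_\bullet$. Writing $D'$ locally as $\operatorname{div}(g)$, the valuation $\nu_{Y_\bullet}(D')$ equals the lex-smallest multi-index occurring in the power series of $g$ in these coordinates, and since $f^*g$ has the same expansion in the pulled-back coordinates at $x'$, the two valuations coincide. Taking closed convex hulls of the two images in $\R^n$ then yields $\okbd_{\widetilde Y_\bullet}(f^*D) = \okbd_{Y_\bullet}(D)$.

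\emph{Step 2: the pseudoeffective case.} Fix an ample divisor $A$ on $X$; then $f^*A$ is nef and big on $\widetilde X$, and for every $\epsilon > 0$ the divisor $D + \epsilon A$ is big. By Step 1,
\[
\okbd_{\widetilde Y_\bullet}(f^*D + \epsilon f^*A) = \okbd_{Y_\bullet}(D + \epsilon A) \quad \text{for all } \epsilon > 0.
\]
Intersecting over $\epsilon > 0$ gives $\bigcap_{\epsilon>0} \okbd_{\widetilde Y_\bullet}(f^*D + \epsilon f^*A) = \oklim_{Y_\bullet}(D)$. The left-hand side equals $\oklim_{\widetilde Y_\bullet}(f^*D)$ because the limiting Okounkov body in \cite{CHPW} is independent of the specific perturbing ample divisor, and this independence extends to nef-and-big perturbations such as $f^*A$ by continuity of $\okbd$ on the big cone (concretely, one sandwiches $\epsilon f^*A$ between small multiples of any fixed ample $\widetilde A$ on $\widetilde X$ and compares).

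The main obstacle I anticipate is the careful verification in Step 1 that the successive-restriction definition of $\nu$ agrees with the lex-order exponent in adapted local coordinates, together with ensuring that the potentially exceptional discrepancy $f^*Y_1 - \widetilde Y_1$ appearing after the first restriction to $\widetilde Y_1$ does not interfere with $\nu_2(f^*D')$. This is controlled precisely because $\widetilde Y_2$ is non-exceptional and dominates $Y_2$, so $\widetilde Y_2$ does not lie in the support of this discrepancy and the corresponding valuations continue to match inductively. Once Step 1 is in place, Step 2 is a formal limit argument.
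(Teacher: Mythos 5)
Your proof follows the same route as the paper's, which compresses the whole argument into two sentences: reduce to the big case, where the statement follows from the identification of $|D|_\R$ with $|f^*D|_\R$ under pullback (the paper phrases this as $H^0(X,\mathcal O_X(D'))=H^0(\widetilde X,\mathcal O_{\widetilde X}(f^*D'))$) together with the matching of the flag valuations near $x$. Your Step 1 is a correct and more careful version of this, including the right observation that the $f$-exceptional components of $f^*D'$ never contain the generic points of the $\widetilde Y_i$ (all of which pass through $x'$, over a neighborhood of which $f$ is an isomorphism), so they do not perturb any $\nu_i$.

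The one place where your written justification does not work as stated is the identification $\bigcap_{\epsilon>0}\okbd_{\widetilde Y_\bullet}(f^*D+\epsilon f^*A)=\oklim_{\widetilde Y_\bullet}(f^*D)$ in Step 2. The ``sandwich'' only works in one direction: for small $t_0>0$ the class $\widetilde A-t_0f^*A$ is ample, which gives $\okbd_{\widetilde Y_\bullet}(f^*D+\epsilon f^*A)\subseteq\okbd_{\widetilde Y_\bullet}(f^*D+(\epsilon/t_0)\widetilde A)$ and hence the inclusion $\subseteq$. But $\epsilon f^*A-\delta\widetilde A$ is never nef for $\delta>0$ (it is negative on every $f$-exceptional curve), so $\epsilon f^*A$ cannot be bounded below by a positive multiple of $\widetilde A$ in any ordering that makes the Okounkov bodies comparable, and ``continuity on the big cone'' does not apply at the limit class $f^*D$, which may fail to be big. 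The missing inclusion $\oklim_{\widetilde Y_\bullet}(f^*D)\subseteq\okbd_{\widetilde Y_\bullet}(f^*D+\epsilon f^*A)$ does hold, but for a different reason: by subadditivity (Lemma \ref{lem-okbd sum}) it suffices to know that the origin lies in $\okbd_{\widetilde Y_\bullet}(\epsilon f^*A)$, and this is immediate because $f^*A$ is semiample, so a general member of $|m\epsilon f^*A|$ misses $x'$ and yields the origin as a valuative point (alternatively, apply your Step 1 to the ample divisor $\epsilon A$ itself). With that substitution Step 2 closes and the proof is complete.
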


\begin{proof}
It is enough to consider for the case where $D$ is big.
In this case, the assertion follows from the construction of Okounkov bodies of big divisors and the fact that $H^0(X, \mc O_X(D'))=H^0(\widetilde{X}, \mc O_{\widetilde{X}}(f^*D'))$ for any $\Z$-divisor $D'$ on $X$.
\end{proof}

\begin{lemma}\label{smflag}
Let $D$ be a pseudoeffective divisor on $X$ and $Y_\bullet$ be an admissible flag centered at a point $x$ on $X$. Then we can take a birational morphism $f: \widetilde{X} \to X$ with $\widetilde{X}$ smooth which is isomorphic over a neighborhood of $x$ and an admissible flag $\widetilde{Y}_\bullet$ on $\widetilde{X}$ such that each $\widetilde{Y}_i$ is smooth and $\oklim_{\widetilde{Y}_\bullet}(f^*D)=\oklim_{Y_\bullet}(D)$.
\end{lemma}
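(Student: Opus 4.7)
The plan is to construct the required birational morphism by iteratively applying Hironaka's embedded resolution of singularities, and then invoke Lemma \ref{birokbd} to conclude equality of limiting Okounkov bodies. Specifically, I would produce a birational morphism $f \colon \widetilde{X} \to X$ with $\widetilde{X}$ smooth which is an isomorphism over an open neighborhood of $x$, together with an admissible flag $\widetilde{Y}_\bullet$ whose members are the strict transforms $\widetilde{Y}_i$ of $Y_i$, each smooth everywhere (not merely at $x$). Because $f$ is birational and an isomorphism near $x$, we will have $f(\widetilde{Y}_i) = Y_i$, so $f(\widetilde{Y}_\bullet) = Y_\bullet$ is the original admissible flag on $X$, and Lemma \ref{birokbd} then yields $\oklim_{\widetilde{Y}_\bullet}(f^*D) = \oklim_{Y_\bullet}(D)$ immediately.

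To build $f$, I would proceed by induction on $k$, producing birational morphisms $f_k \colon X_k \to X$ with $X_k$ smooth and $f_k$ an isomorphism over a fixed open neighborhood of $x$, such that the strict transforms $Y_1^{(k)}, \ldots, Y_k^{(k)}$ in $X_k$ are all smooth. Take $X_0 = X$ as the base case. For the inductive step, note that $Y_{k+1}^{(k)} \subseteq Y_k^{(k)}$, with $Y_k^{(k)}$ smooth and $Y_{k+1}^{(k)}$ smooth at $x$ (since $f_k$ is an isomorphism near $x$ and $Y_{k+1}$ is smooth there by admissibility). Applying Hironaka's embedded resolution to the subvariety $Y_{k+1}^{(k)}$ of the smooth $Y_k^{(k)}$ produces a finite sequence of blow-ups of $Y_k^{(k)}$ along smooth centers, each contained in the singular locus of the relevant proper transform of $Y_{k+1}^{(k)}$ (hence disjoint from $x$), whose composition resolves $Y_{k+1}^{(k)}$. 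Because $Y_k^{(k)}$ is smoothly embedded in the smooth $X_k$, these centers are also smooth subvarieties of $X_k$, so I perform the same sequence of blow-ups on $X_k$ to obtain a smooth $X_{k+1}$, still an isomorphism near $x$.

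The verification that every flag member remains smooth after this step is where the construction pays off: the strict transform of $Y_k^{(k)}$ in $X_{k+1}$ coincides with the iterated blow-up of the smooth $Y_k^{(k)}$ along smooth centers and is therefore smooth, while the strict transform of $Y_{k+1}^{(k)}$ in $X_{k+1}$ equals the strict transform of $Y_{k+1}^{(k)}$ inside this blow-up of $Y_k^{(k)}$, which is smooth by Hironaka. For $j < k$, each blow-up center lies in $Y_k^{(k)} \subseteq Y_j^{(k)}$ and is smooth inside the smooth $Y_j^{(k)}$, so the strict transform of $Y_j^{(k)}$ remains smooth. Iterating up to $k = n-1$ yields the desired $\widetilde{X} := X_{n-1}$ together with its admissible flag $\widetilde{Y}_\bullet$. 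The main technical point to watch carefully is that at each inductive step the resolution centers are genuinely smooth subvarieties of the ambient $X_k$ and are contained in every previously resolved $Y_j^{(k)}$; this is exactly what forces both the flag structure and all previously attained smoothness to persist simultaneously.
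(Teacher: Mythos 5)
Your proposal is correct and is essentially the paper's own argument: successively apply embedded resolution of singularities to the flag members (with centers away from $x$, since each $Y_i$ is smooth at $x$), take strict transforms to get the flag $\widetilde{Y}_\bullet$, and conclude by Lemma \ref{birokbd}. The only differences are cosmetic --- you resolve in the order $Y_1,\ldots,Y_{n-1}$ rather than $Y_{n-1},\ldots,Y_1$ and spell out why smoothness of the earlier flag members persists, which the paper leaves implicit.
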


\begin{proof}
Recall that each subvariety $Y_i$ from the admissible flag $Y_\bullet$ is smooth at $x$.
By successively taking embedded resolutions of singularities of $Y_{n-1}, \ldots, Y_1$ in $X$, we can take a birational morphism $f: \widetilde{X} \to X$ with $\widetilde{X}$ smooth such that $f$ is isomorphic over $x$. For $1 \leq i \leq n-1$, let $\widetilde{Y}_i$ be the strict transform of $Y_i$. Then we obtain an admissible flag on $\widetilde{X}$ as follows:
$$
\widetilde{Y}_\bullet : \widetilde{X}=\widetilde{Y}_0 \supseteq \widetilde{Y}_1 \supseteq \cdots \supseteq \widetilde{Y}_{n-1} \supseteq \widetilde{Y}_n=\{ f^{-1}(x) \}.
$$
Now the assertion follows from Lemma \ref{birokbd}.
\end{proof}

\begin{remark}\label{remk_smflag}
By the above lemma, we can assume that every subvariety $Y_i$ in the admissible flag $Y_\bullet$ is smooth.
Such smoothness assumption on $Y_\bullet$ will be useful in the induction argument used in the proofs of  Theorems \ref{thrm-B-criterion}, \ref{thrm-B+criterion}, \ref{sesthm}, and Lemma \ref{ampleorigin}.
In the situation of Lemma \ref{smflag}, note that $x \in \bm(D)$ (resp. $x \in \bp(D)$) if and only if $f^{-1}(x) \in \bm(f^*D)$ (resp. $f^{-1}(x) \in \bp(f^*D)$). Furthermore, if $D$ is nef and big, then so is $f^*D$.
\end{remark}

\begin{lemma}\label{lem-okbd sum}
Let $D, D'$ be pseudoeffective divisors on $X$. Then we have
$$
\oklim_{Y_{\bullet}}(D)+\oklim_{Y_{\bullet}}(D')\subseteq\oklim_{Y_{\bullet}}(D+D').
$$
\end{lemma}

\begin{proof}
For an ample divisor $A$ and any $\epsilon,\epsilon'>0$, it follows from the convexity of $\okbd_{Y_{\bullet}}(D)$ (cf. \cite[Proof of Corollary 4.12]{lm-nobody}, \cite[Proposition 2.32]{KK}) that
$$\okbd_{Y_{\bullet}}(D+\epsilon A)+\okbd_{Y_{\bullet}}(D'+\epsilon' A)\subseteq\okbd_{Y_{\bullet}}(D+D'+(\epsilon+\epsilon')A).$$
By taking the limits $\epsilon \to 0, \epsilon' \to 0$, we obtain the statement.
\end{proof}

\begin{remark}\label{remk_additivity}
Let $D,D'$ be pseudoeffective divisors such that $E:=D'-D$ is effective. We can easily check that if the origin of $\R^n$ is contained in $\oklim_{Y_\bullet}(E)$, then
$$
\oklim_{Y_\bullet}(D) \subseteq \oklim_{Y_\bullet}(D').
$$
This situation occurs, for example, when $Y_n =\{x\} \not\subseteq \text{SB}(E)$.
For any ample divisor $A$, we can find an effective divisor $E' \sim_{\R} E+ A$ such that $\nu_{Y_\bullet}(E')=(0, \cdots, 0)$. Thus $\oklim_{Y_\bullet}(E)$ contains the origin of $\R^n$.
Clearly, the inclusion does not hold in general.
\end{remark}

The following lemma will be helpful in computing the limiting Okounkov bodies using the divisorial Zariski decompositions.

\begin{lemma}\label{okbdzdlem}
Let $D$ be a pseudoeffective divisor on $X$, and $D=P+N$ be the divisorial Zariski decomposition. Fix an admissible flag $Y_\bullet$ on $X$. Then we have $\oklim_{Y_\bullet}(D)=\oklim_{Y_\bullet}(P) + \oklim_{Y_\bullet}(N)$. In particular, if $Y_n=\{ x \} \not\subseteq \Supp(N)$, then $\oklim_{Y_\bullet}(D)=\oklim_{Y_\bullet}(P)$.
\end{lemma}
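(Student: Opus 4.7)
The plan is to work one $\epsilon$-perturbation at a time and combine a shift identity for the classical Okounkov body of a big divisor with the defining intersection $\oklim_{Y_\bullet}(\cdot) = \bigcap_{\epsilon>0}\okbd_{Y_\bullet}(\cdot + \epsilon A)$. The key technical ingredient is the additivity
$$
\nu_{Y_\bullet}(E_1 + E_2) = \nu_{Y_\bullet}(E_1) + \nu_{Y_\bullet}(E_2)
$$
for effective divisors $E_1, E_2$, which follows by induction on the coordinate index from the fact that the successive orders of vanishing defining $\nu_{Y_\bullet}$ are additive on effective divisors.

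The first step will be to establish a shift identity for big divisors: if $B$ is a big divisor with divisorial Zariski decomposition $B = P_B + N_B$, then
$$
\okbd_{Y_\bullet}(B) \;=\; \okbd_{Y_\bullet}(P_B) + \nu_{Y_\bullet}(N_B).
$$
The inclusion ``$\supseteq$'' is immediate from additivity, since any $B' \in |P_B|_\R$ yields $B' + N_B \in |B|_\R$. For ``$\subseteq$'', the defining equality $\ord_E(\|B\|) = \ord_E(N_B)$ for every prime divisor $E \subseteq \Supp N_B$ forces every effective $D' \in |B|_\R$ to satisfy $\ord_E(D') \geq \ord_E(N_B)$ along these components; hence $D' - N_B$ is an effective representative of $P_B$, and additivity gives $\nu_{Y_\bullet}(D') \in \okbd_{Y_\bullet}(P_B) + \nu_{Y_\bullet}(N_B)$. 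Note that $P_B$ is itself big by the volume-preserving property $\vol(P_B) = \vol(B) > 0$, so its Okounkov body is well-defined.

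Applying this shift identity to $B = D + \epsilon A$ with Zariski decomposition $D + \epsilon A = P_\epsilon + N_\epsilon$ yields
$$
\okbd_{Y_\bullet}(D + \epsilon A) \;=\; \okbd_{Y_\bullet}(P_\epsilon) + \nu_{Y_\bullet}(N_\epsilon).
$$
Subadditivity of asymptotic valuations forces $N_\epsilon \leq N$ for every $\epsilon > 0$, and continuity on the big cone gives $N_\epsilon \to N$ as $\epsilon \to 0^+$. Intersecting over $\epsilon$ and using continuity of $\nu_{Y_\bullet}$ together with $\oklim_{Y_\bullet}(P) = \bigcap_\delta \okbd_{Y_\bullet}(P + \delta A)$ should yield
$$
\oklim_{Y_\bullet}(D) \;=\; \oklim_{Y_\bullet}(P) + \nu_{Y_\bullet}(N).
$$
The same shift identity applied to $N + \epsilon A$, using that $N$ is its own negative part (a standard feature of the divisorial Zariski decomposition), collapses $\oklim_{Y_\bullet}(N)$ to the single point $\{\nu_{Y_\bullet}(N)\}$ since $\okbd_{Y_\bullet}(\epsilon A) = \epsilon\,\okbd_{Y_\bullet}(A)$ shrinks to the origin. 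Combining these gives the asserted equality. For the ``in particular'' claim, when $x \notin \Supp N$, a flag-induction shows that each $Y_i$ avoids $\Supp N$ and its iterated restrictions, whence $\nu_i(N) = 0$ for every $i$ and $\nu_{Y_\bullet}(N) = 0$.

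The principal obstacle will be the rigorous passage to the limit in the penultimate display. The difficulty is that $P_\epsilon \neq P + \epsilon A$ in general; rather, $P_\epsilon = (P + \epsilon A) + (N - N_\epsilon)$, so turning $\bigcap_\epsilon \okbd_{Y_\bullet}(P_\epsilon)$ into $\oklim_{Y_\bullet}(P)$ will require a sandwich argument using shift inclusions in both directions, together with the openness of the ample cone, which permits $(\delta - \epsilon)A - (N - N_\epsilon)$ to be represented by an effective divisor with $\nu_{Y_\bullet}$ arbitrarily close to the origin once $\epsilon$ is small compared to a fixed $\delta > 0$.
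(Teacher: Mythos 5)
Your argument is correct in substance, but it is a genuinely different route from the paper's: the paper disposes of the big case by citing K\"uronya--Lozovanu \cite[Theorem C (3)]{AV-loc pos2} and then asserts that the pseudoeffective case ``follows from the big case and the definition of limiting Okounkov bodies.'' Your shift identity $\okbd_{Y_\bullet}(B)=\okbd_{Y_\bullet}(P_B)+\nu_{Y_\bullet}(N_B)$ is essentially a self-contained proof of the cited big-case result, and its two halves are sound: additivity of $\nu_{Y_\bullet}$ on effective divisors gives ``$\supseteq$'', and the inequality $\ord_E(D')\geq\ord_E(\|B\|)=\ord_E(N_B)$ for every effective $D'\equiv B$ gives ``$\subseteq$''. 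More importantly, the limit passage that you flag as the principal obstacle is exactly the content the paper's one-line reduction sweeps under the rug, and your proposed resolution works: for fixed $\delta>0$ and $\epsilon$ small, $(\delta-\epsilon)A-(N-N_\epsilon)$ is ample by openness of the ample cone, it admits effective representatives with $\nu_{Y_\bullet}$ arbitrarily small (density of valuative points plus the fact that the origin lies in the Okounkov body of a nef divisor, i.e.\ Lemma \ref{lem-main lemma}(2)), and closedness of $\okbd_{Y_\bullet}(P+\delta A)$ then yields $\okbd_{Y_\bullet}(P_\epsilon)\subseteq\okbd_{Y_\bullet}(P+\delta A)$; combined with $\nu_{Y_\bullet}(N_\epsilon)\to\nu_{Y_\bullet}(N)$ and the easy inclusion $\okbd_{Y_\bullet}(P+\epsilon A)+\nu_{Y_\bullet}(N)\subseteq\okbd_{Y_\bullet}(D+\epsilon A)$, this gives $\oklim_{Y_\bullet}(D)=\oklim_{Y_\bullet}(P)+\nu_{Y_\bullet}(N)$. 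What your approach buys is an actual proof of the pseudoeffective case; what it costs is length and reliance on standard but uncited facts ($N_\epsilon\leq N$, $N_\epsilon\to N$, $\vol(P_B)=\vol(B)$).

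One small imprecision: to collapse $\oklim_{Y_\bullet}(N)$ to $\{\nu_{Y_\bullet}(N)\}$ you invoke $\okbd_{Y_\bullet}(\epsilon A)=\epsilon\,\okbd_{Y_\bullet}(A)$, but the positive part of $N+\epsilon A$ is $\epsilon A+(N-N''_\epsilon)$, not $\epsilon A$. The fix is the same device you already use: for fixed $\delta$ and $\epsilon\ll\delta$ the class $(\delta-\epsilon)A-(N-N''_\epsilon)$ is ample, so $\okbd_{Y_\bullet}(P''_\epsilon)\subseteq\okbd_{Y_\bullet}(\delta A)=\delta\,\okbd_{Y_\bullet}(A)$, and intersecting over $\delta$ gives the collapse. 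With that adjustment the argument is complete.
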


\begin{proof}
When $D$ is big, the assertion is exactly the same as \cite[Theorem C (3)]{AV-loc pos2}. If $D$ is only pseudoeffective, then the assertion follows from the big case and the definition of limiting Okounkov bodies.
\end{proof}

It is sometimes useful to work in the following restricted situations.
For $1 \leq k \leq n$, we define the \emph{$k$-th partial flag} $Y_{k\bullet}$ of a given admissible flag $Y_\bullet$ as
$$
Y_{k\bullet}:=Y_k \supseteq  \cdots\supseteq Y_n.
$$
Suppose that $D$ is a big divisor such that $Y_k\not\subseteq\bp(D)$.
We define $\nu_{Y_{k\bullet}}:|D|_\R\to\R^n_{\geq0}$ as the function $\nu_\bullet$ defined above (\ref{sp})
by letting  $\nu_i(D')=0$ for all $D'\in|D|_\R$ if $i\leq k$.

\begin{definition}\label{def-restricted okbd}
The \emph{$k$-th restricted Okounkov body} $\okbd_{Y_{k\bullet}}(D)$ of $D$ with respect to the partial flag $Y_{k\bullet}$ is defined as
the following subset of $\{ 0 \}^k \times \R_{\geq 0}^{n-k}$  (which we often identify with $\R_{\geq 0}^{n-k}$):
\begin{equation}
\okbd_{Y_{k\bullet}}(D):=\text{ the closure of the convex hull of }\nu_{{k\bullet}}(|D|_{\R}).
\end{equation}
\end{definition}

By convention, the $0$-th restricted Okounkov body is the usual Okounkov body, i.e., $\okbd_{Y_\bullet}(D)=\okbd_{Y_{0\bullet}}(D)$.
The $k$-th restricted Okounkov body $\okbd_{Y_k\bullet}(D)$ of a $\Z$-divisor $D$ is nothing but the Okounkov body of a graded linear series $W_\bullet$ in \cite[p.804]{lm-nobody}, where $W_m = \text{Im}\left(H^0(X, \mc O_{X}(mD)) \to H^0(Y_k, \mc O_{Y_k}(mD|_{Y_k})) \right)$.

The following is one of the most important properties of Okounkov bodies.

\begin{theorem}[{\cite[(2,7) p.804]{lm-nobody}}]\label{restokbd}
Let $D$ be a big divisor on $X$, and $Y_\bullet$ be an admissible flag on $X$. Assume that $Y_k \not\subseteq \bp(D)$. Then we have
$$
\vol_{\R^{n-k}}(\Delta_{Y_{k\bullet}}(D)) = \frac{1}{(n-k)!} \vol_{X|Y_k}(D).
$$
\end{theorem}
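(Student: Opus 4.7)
The plan is to identify $\okbd_{Y_{k\bullet}}(D)$ with the Okounkov body of a graded linear series on $Y_k$ and then invoke the general volume formula of Lazarsfeld--Musta\c{t}\u{a} for such Okounkov bodies. As noted just before the statement, one has $\okbd_{Y_{k\bullet}}(D)=\okbd_{Y_{k\bullet}}(W_\bullet)$ as subsets of $\R^{n-k}$, where $W_m=\im\bigl(H^0(X,\mathcal{O}_X(mD))\to H^0(Y_k,\mathcal{O}_{Y_k}(mD|_{Y_k}))\bigr)$ is viewed as a graded linear series on the $(n-k)$-dimensional variety $Y_k$ with respect to the residual flag $Y_{k+1}\supseteq\cdots\supseteq Y_n$. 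By the very definition of the restricted volume, $\vol(W_\bullet):=\limsup_m \dim W_m/(m^{n-k}/(n-k)!)$ equals $\vol_{X|Y_k}(D)$, so the claim reduces to showing
$$\vol_{\R^{n-k}}(\okbd_{Y_{k\bullet}}(W_\bullet))=\frac{1}{(n-k)!}\vol(W_\bullet).$$

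The key step is to verify the hypothesis under which this volume formula holds, namely a birationality condition requiring that for some $m$ the rational map $\phi_m\colon Y_k\dashrightarrow\P(W_m^{\vee})$ defined by $W_m$ be birational onto its image. This is precisely where the assumption $Y_k\not\subseteq\bp(D)$ enters: by definition of the augmented base locus, we can write $D\equiv A+E$ with $A$ an ample $\Q$-divisor and $E$ an effective divisor satisfying $Y_k\not\subseteq\Supp(E)$. For $m$ sufficiently divisible, $mA|_{Y_k}$ is very ample on $Y_k$ and Serre vanishing yields surjectivity of $H^0(X,mA)\twoheadrightarrow H^0(Y_k,mA|_{Y_k})$. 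Multiplication by the defining section of $mE$ followed by restriction embeds this space inside $W_m$, so $W_m$ contains a sublinear series defining a birational embedding of $Y_k$, giving the desired birationality of $\phi_m$.

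Once the birationality condition is in place, the volume formula follows from the standard semigroup argument of \cite{lm-nobody}: the valuation $\nu_{Y_{k\bullet}}$ separates linearly independent sections, so $\dim W_m\leq\#\nu_{Y_{k\bullet}}(W_m\setminus\{0\})$, while a Brion--Khovanskii-type lattice-point count for the semigroup $\Gamma(W_\bullet)\subseteq\mathbb{N}\times\Z^{n-k}$ gives $\#\bigl(m\okbd_{Y_{k\bullet}}(W_\bullet)\cap\Z^{n-k}\bigr)\sim m^{n-k}\vol_{\R^{n-k}}(\okbd_{Y_{k\bullet}}(W_\bullet))$. The birationality guarantees that $\Gamma(W_\bullet)$ generates a full-rank sublattice of $\Z\times\Z^{n-k}$, which forces these two counts to agree asymptotically. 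Dividing by $m^{n-k}/(n-k)!$ and letting $m\to\infty$ yields the stated identity. The main obstacle in the argument is precisely the birationality verification, since without it the image of $\nu_{Y_{k\bullet}}$ need not generate a full-rank sublattice and the Brion--Khovanskii counting estimate cannot be upgraded to the required asymptotic equality.
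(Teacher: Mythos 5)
The paper does not actually prove this statement---it is quoted directly from Lazarsfeld--Musta\c{t}\u{a} \cite[(2.7) p.~804]{lm-nobody}, the only in-text justification being the remark identifying $\okbd_{Y_{k\bullet}}(D)$ with the Okounkov body of the restricted graded linear series $W_\bullet$. Your argument is a correct reconstruction of exactly that intended route: reduce to the Lazarsfeld--Musta\c{t}\u{a} volume formula for graded linear series via $W_\bullet$, with the hypothesis $Y_k \not\subseteq \bp(D)$ used (through a decomposition $D \equiv A + E$, $Y_k \not\subseteq \Supp(E)$) precisely to verify the birationality condition that makes the semigroup counting argument go through.
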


\begin{remark}\label{rem-restric okbd}
Assume that $Y_k$ is smooth. We can regard the $k$-th partial flag $Y_{k \bullet}$ as an admissible flag on $Y_k$ so that $\okbd_{Y_{k\bullet}}(D|_{Y_k})$ is a subset of $\R^{n-k}$. One can also consider $\okbd_{Y_{k\bullet}}(D) \subset \{0\}^k \times \R^{n-k}$ as an object in $\R^{n-k}$.
Then
$$\okbd_{Y_{k\bullet}}(D) \subseteq \okbd_{Y_{k\bullet}}(D|_{Y_k})$$
holds in general.
If $D$ is nef and big and $Y_k\not\subseteq\bp(D)$, then the equality holds and $\vol_{\R^{n-k}}(\okbd_{Y_{k\bullet}}(D|_{Y_k})) = \frac{1}{(n-k)!} \vol_{X}(D|_{Y_k})$.
\end{remark}

\begin{definition}
Let $\mathbf x\in\okbd_{Y_{k\bullet}}(D)$ be a point.
If $\mathbf x$ is of the form $\mathbf x=\nu_{Y_{k\bullet}}(D')$  of for some $D'\in|D|_\R$,
then $\mathbf x$ is called a \emph{($k$-th) valuative point} of $\okbd_{Y_{k\bullet}}(D)$. We denote by
$$
\Gamma_{k}:=\{\nu_{Y_{k\bullet}}(D') \mid D'\in|D|_\R\} \subseteq\{0\}^{k}\times\R^{n-k}
$$
the set of $k$-th valuative points.
\end{definition}

It is known that $\Gamma_k$ forms a dense subset in $\Delta_{Y_{k\bullet}}(D)$ (see \cite[Remark 2.12]{CFKLRS}).
Thus it is enough to take the closure of the image $\nu_{Y_{k\bullet}}(|D|_\R)$ in Definition \ref{def-okbd} and \ref{def-restricted okbd} to obtain $\okbd_{Y_{k\bullet}}(D)$.

For any subset $\Delta\subseteq\R^n$, we denote
$\Delta_{x_1=\cdots=x_k=0}:=\Delta\cap(\{0\}^k\times\R^{n-k})$. We often regard $\Delta_{x_1=\cdots=x_k=0}$ as an object in $\R^{n-k}$.
It is easy to check that the condition $Y_k\not\subseteq\bp(D)$ implies that $\okbd_{Y_{\bullet}}(D)_{x_1=\cdots=x_k=0}\neq\emptyset$.
The following is a generalization of  \cite[Theorem 4.26]{lm-nobody} and \cite[Theorem 3.4]{jow}, and it will play a crucial role in studying the augmented base loci and moving Seshadri constants.

\begin{theorem}[{\cite[Theorem 1.1]{CPW2}}]\label{thrm-okbd slice}
Let $D$ be a big divisor on $X$.
Suppose that $Y_\bullet$ is an admissible flag on $X$ such that $Y_{k} \not\subseteq \bp(D)$. Then we have
$$\okbd_{Y_{\bullet}}(D)_{x_1=\cdots=x_k=0}=\okbd_{Y_{k\bullet}}(D).$$
\end{theorem}

\section{Restricted base loci via Okounkov bodies}\label{restsec}

In this section, we prove Theorem \ref{main1} and Corollary \ref{maincor1}. More specifically, we extract the restricted base locus $\bm(D)$ of a pseudoeffective divisor $D$ from its associated limiting Okounkov bodies.
Throughout this section, $X$ is a smooth projective variety of dimension $n$.

We show the following lemma first.

\begin{lemma}\label{lem-main lemma}
Let $D$ be a big divisor on $X$, and fix an admissible flag $Y_\bullet$ on $X$.
\begin{enumerate}
\item[$(1)$] If $Y_1\subseteq\bm(D)$, then the origin of $\R^n$ is not contained in $\okbd_{Y_\bullet}(D)$.

\item[$(2)$] If $Y_1\not\subseteq\bm(D)$, then for any integer $k\geq 1$ with $Y_k\not\subseteq\bm(D)$, we have
$\okbd_{Y_\bullet}(D)_{x_1=\cdots=x_k=0}\neq \emptyset.$
\end{enumerate}
\end{lemma}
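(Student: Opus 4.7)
For part $(1)$, the plan is to exploit the valuative characterization of the restricted base locus recalled in Subsection~\ref{asyvalsubsec}: $Y_1\subseteq\bm(D)$ is equivalent to $\ord_{Y_1}(\|D\|)>0$. For every $D'\in|D|_{\R}$ one has $D'\equiv D$, and hence $\nu_1(D')=\ord_{Y_1}(D')\geq\ord_{Y_1}(\|D\|)>0$. Because $\okbd_{Y_\bullet}(D)$ is the closure of the set of valuative points, it is contained in the closed half-space $\{x_1\geq\ord_{Y_1}(\|D\|)\}$, and the origin is excluded.

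For part $(2)$, the strategy is to reduce to the setting of Lemma~\ref{lem-okbd slice} through the defining intersection $\okbd_{Y_\bullet}(D)=\oklim_{Y_\bullet}(D)=\bigcap_{\delta>0}\okbd_{Y_\bullet}(D+\delta A)$ for a fixed ample $A$. The crucial technical input is the inclusion
\[
\bp(D+\delta A)\subseteq\bm(D)\qquad\text{for every }\delta>0.
\]
To prove this, pick any $0<\eta<\delta$; then $(\delta-\eta)A$ is ample, and substituting it for $A'$ in $\bp(D+\delta A)=\bigcap_{A'\text{ ample}}\text{SB}(D+\delta A-A')$ yields $\bp(D+\delta A)\subseteq\text{SB}(D+\eta A)\subseteq\bm(D)$, the last inclusion being the definition of $\bm$.

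Granted this bridge, the hypothesis $Y_k\not\subseteq\bm(D)$ upgrades to $Y_k\not\subseteq\bp(D+\delta A)$ for every $\delta>0$. Lemma~\ref{lem-okbd slice} then identifies the slice $\okbd_{Y_\bullet}(D+\delta A)_{x_1=\cdots=x_k=0}$ with the restricted Okounkov body $\okbd_{Y_{k\bullet}}(D+\delta A)$, which by Theorem~\ref{restokbd} has positive $(n-k)$-dimensional volume equal to $\frac{1}{(n-k)!}\vol_{X|Y_k}(D+\delta A)$, and is therefore nonempty.

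To conclude, I would observe that Lemma~\ref{lem-okbd sum}, combined with the easily verified fact that $0\in\okbd_{Y_\bullet}(\eta A)$ for ample $\eta A$ (using a generic section of $|m\eta A|$ for $m\gg 0$ whose zero divisor is transverse to the flag), shows that the family $\{\okbd_{Y_\bullet}(D+\delta A)\}_{\delta>0}$ is decreasing as $\delta\to 0^+$. Since slicing by a linear subspace commutes with arbitrary intersection,
\[
\okbd_{Y_\bullet}(D)_{x_1=\cdots=x_k=0}=\bigcap_{\delta>0}\okbd_{Y_\bullet}(D+\delta A)_{x_1=\cdots=x_k=0},
\]
which is a nested intersection of nonempty compact convex sets and hence nonempty. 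The main obstacle I expect is precisely the inclusion $\bp(D+\delta A)\subseteq\bm(D)$: this is what converts the weaker restricted-base-locus hypothesis into the augmented-base-locus condition needed to invoke Lemma~\ref{lem-okbd slice}. Once this is in place, the remainder is a standard compactness-and-limit argument within the limiting Okounkov body framework.
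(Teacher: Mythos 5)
Your proof of part $(1)$ is correct and is the same as the paper's: both arguments rest on $Y_1\subseteq\bm(D)\Leftrightarrow\ord_{Y_1}(\|D\|)>0$ and the identity $\nu_1(D')=\ord_{Y_1}(D')\geq\ord_{Y_1}(\|D\|)$ for all $D'\in|D|_\R$, which pushes the whole body into the half-space $\{x_1\geq\ord_{Y_1}(\|D\|)\}$.

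For part $(2)$ your route is genuinely different from the paper's. You perturb to $D+\delta A$, pass from the hypothesis $Y_k\not\subseteq\bm(D)$ to $Y_k\not\subseteq\bp(D+\delta A)$ via the inclusion $\bp(D+\delta A)\subseteq\bm(D)$, invoke the slicing theorem (Lemma~\ref{lem-okbd slice}) to see that each slice $\okbd_{Y_\bullet}(D+\delta A)_{x_1=\cdots=x_k=0}$ is a nonempty compact convex set, and finish by a nested-intersection argument. This works, but it relies on the comparatively heavy slicing machinery of Lazarsfeld--Musta\c{t}\u{a} and forces the detour through the augmented base locus precisely because Lemma~\ref{lem-okbd slice} needs the stronger hypothesis $Y_k\not\subseteq\bp(\cdot)$. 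The paper instead argues directly with $D$: since $\ord_{Y_k}(\|D\|)=0$, for every $\eps>0$ there is $D'\in|D|_\R$ with $\ord_{Y_k}(D')<\eps$; because $Y_i\supseteq Y_k$ for $i\leq k$ one gets $\ord_{Y_i}(D')\leq\ord_{Y_k}(D')<\eps$, and Remark~\ref{rem-nu<ord} gives $\nu_i(D')<\eps$ for all $i\leq k$. Compactness of the Okounkov body then produces a point of the slice. The paper's argument is more elementary and self-contained; yours buys the extra identification of the slice with the restricted Okounkov body of the perturbed divisor, which is not needed here. One small point of care in your argument: the definitions of $\text{SB}$, $\bm$ and $\bp$ in the paper carry $\Q$-divisor conditions, so when you substitute $A'=(\delta-\eta)A$ you should either arrange $D+\eta A$ to be a $\Q$-divisor or simply cite the standard identity $\bm(D)=\bigcup_{A'}\bp(D+A')$ from Ein--Lazarsfeld--Musta\c{t}\u{a}--Nakamaye--Popa; this is routine but worth stating.
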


\begin{proof}
$(1)$ In this case, $Y_1$ is an irreducible component of $\bm(D)$ and $\ord_{Y_1}(||D||)>0$ by \cite[Proposition 2.8]{elmnp-restricted vol and base loci}.
Thus for any $D'\in|D|_{\R}$, we have $\nu_1(D')=\ord_{Y_1}D'\geq\ord_{Y_1}(||D||)>0$.
It follows that
$$
\inf\{x_1 \mid (x_1,\cdots,x_n)\in\okbd_{Y_\bullet}(D)\}>0.
$$
In particular, the origin of $\R^n$ is not contained in $\okbd_{Y_\bullet}(D)$.

\noindent$(2)$ Let $k\geq 1$ be an integer such that $Y_k\not\subseteq\bm(D)$.
For any $\epsilon >0$, there exists an effective divisor $D'\sim_\R D$ such that
$\ord_{Y_k}(D')<\epsilon $.
Since $Y_i\supseteq Y_k$ for $i\leq k$, we have $\ord_{Y_i}(D')\leq \ord_{Y_k}(D')$, and hence,
$\ord_{Y_i}(D')<\epsilon$ for all $i\leq k$.
By Remark \ref{rem-nu<ord}, we see that $\nu_i(D')\leq\ord_{Y_i}(D')$.
Thus
$$\nu_i(D')<\epsilon \;\text{ for all }i\leq k.$$
This implies that for any $\epsilon>0$,
there exists a valuative point $(x_1,\cdots,x_n)$ of $\okbd_{Y_\bullet}(D)$ such that $0\leq x_i<\epsilon$ for all integers
$i$ with $1\leq i\leq k$.
Thus we obtain $\okbd_{Y_\bullet}(D)_{x_1=\cdots=x_k=0}\neq \emptyset$.
\end{proof}

We now prove Theorem \ref{main1} as Theorem \ref{thrm-B-criterion}.

\begin{theorem}\label{thrm-B-criterion}
Let $D$ be a pseudoeffective divisor on $X$.
Then the following are equivalent.
\begin{enumerate}
 \item[$(1)$] $x\in\bm(D)$.

 \item[$(2)$] For any admissible flag $Y_\bullet$ centered at $x$, the origin of $\R^n$ is not contained in $\oklim_{Y_\bullet}(D)$.

 \item[$(3)$] For some admissible flag $Y_\bullet$ centered at $x$, the origin of $\R^n$ is not contained in $\oklim_{Y_\bullet}(D)$.
\end{enumerate}

\end{theorem}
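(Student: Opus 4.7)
The plan is to prove the cycle $(1) \Rightarrow (2) \Rightarrow (3) \Rightarrow (1)$, of which $(2) \Rightarrow (3)$ is trivial. I first reduce to the case of big $D$ by fixing an ample divisor $A$. On the base locus side, $\bm(D+\eps A) \subseteq \bm(D)$ for any $\eps>0$ directly from the definition, so $x \notin \bm(D) \Rightarrow x \notin \bm(D+\eps A)$ for all $\eps>0$; on the other hand, by the limit characterization $\ord_V(||D||) = \lim_{\eps \to 0^+} \ord_V(||D+\eps A||)$ applied to the irreducible component $V \subseteq \bm(D)$ through $x$, one gets $x \in \bm(D) \Rightarrow x \in \bm(D+\eps A)$ for all sufficiently small $\eps>0$. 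Since $\oklim_{Y_\bullet}(D) \subseteq \okbd_{Y_\bullet}(D+\eps A)$ for every $\eps>0$ and $\oklim_{Y_\bullet}(D) = \bigcap_\eps \okbd_{Y_\bullet}(D+\eps A)$, the pseudoeffective case follows once the big case is established.

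In the big case, the implication $(3) \Rightarrow (1)$ is handled via the strong contrapositive form: if $x \notin \bm(D)$, then the origin lies in $\okbd_{Y_\bullet}(D)$ for \emph{every} admissible flag $Y_\bullet$ centered at $x$ (this also delivers the forward direction of (2)). Since each $Y_i$ of such a flag contains $Y_n = \{x\}$, the hypothesis $x \notin \bm(D)$ forces $Y_i \not\subseteq \bm(D)$ for every $i$. Applying Lemma~\ref{lem-main lemma}(2) with $k = n$ then yields $\okbd_{Y_\bullet}(D)_{x_1 = \cdots = x_n = 0} \neq \emptyset$, i.e., the origin lies in $\okbd_{Y_\bullet}(D)$.

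For the remaining direction $(1) \Rightarrow (2)$ in the big case, fix $x \in \bm(D)$ and any admissible flag $Y_\bullet$ centered at $x$; I show that the origin does not lie in $\okbd_{Y_\bullet}(D)$. If $Y_1 \subseteq \bm(D)$, Lemma~\ref{lem-main lemma}(1) immediately concludes. Otherwise, let $k_0 \geq 2$ be the smallest index with $Y_{k_0} \subseteq \bm(D)$, which exists because $Y_n = \{x\} \subseteq \bm(D)$. The partial flag $Y_{(k_0-1)\bullet}$ is an admissible flag on the lower-dimensional variety $Y_{k_0-1}$ with first element $Y_{k_0}$. Arguing by induction on $n = \dim X$: applying the theorem to $(Y_{k_0-1}, D|_{Y_{k_0-1}})$ (after a small ample perturbation to arrange $Y_{k_0-1} \not\subseteq \bp(D)$ and transferring $Y_{k_0} \subseteq \bm(D)$ to $Y_{k_0} \subseteq \bm(D|_{Y_{k_0-1}})$ via comparison of asymptotic valuations under restriction), the origin is not contained in $\okbd_{Y_{(k_0-1)\bullet}}(D|_{Y_{k_0-1}})$. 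Remark~\ref{rem-restric okbd} supplies the inclusion $\okbd_{Y_{(k_0-1)\bullet}}(D) \subseteq \okbd_{Y_{(k_0-1)\bullet}}(D|_{Y_{k_0-1}})$, and Lemma~\ref{lem-okbd slice} identifies $\okbd_{Y_\bullet}(D)_{x_1 = \cdots = x_{k_0-1} = 0}$ with $\okbd_{Y_{(k_0-1)\bullet}}(D)$, so the origin is not in this slice and hence not in $\okbd_{Y_\bullet}(D)$. The hardest step will be justifying the inductive argument, specifically the ample perturbation that achieves $Y_{k_0-1} \not\subseteq \bp(D)$ while preserving the descent $Y_{k_0} \subseteq \bm(D|_{Y_{k_0-1}})$.
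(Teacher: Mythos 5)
Your reduction to the big case and your treatment of $(3)\Rightarrow(1)$ (contrapositive via Lemma \ref{lem-main lemma}(2) with $k=n$) match the paper. The gap is in $(1)\Rightarrow(2)$, specifically in the step you yourself flag as hardest: transferring $Y_{k_0}\subseteq\bm(D)$ to $Y_{k_0}\subseteq\bm(D|_{Y_{k_0-1}})$. This transfer is false in general, and no ample perturbation will rescue it, because the comparison of asymptotic valuations under restriction goes the wrong way: sections of $D|_V$ need not lift to $X$, so one only gets $\bm(D|_V)\subseteq\bm(D)\cap V$, whereas you need the reverse containment at $Y_{k_0}$. Concretely, let $X$ be the blow-up of $\P^2$ at a point with exceptional curve $E$ and $H$ the pullback of a line, and let $D=H+E$. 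Then the divisorial Zariski decomposition is $D=H+E$ with positive part $H$, so $\bm(D)=E$. Take $x\in E$ and $Y_1=C$ the strict transform of a line through the blown-up point chosen so that $C\cap E=\{x\}$; then $k_0=2$, $C\not\subseteq\bm(D)$, but $D|_C$ has degree $(H+E)\cdot(H-E)=2$ on $C\cong\P^1$, hence is ample and $\bm(D|_C)=\emptyset$. Your inductive hypothesis applied to $(C,D|_C)$ then asserts that the origin \emph{is} in $\okbd_{Y_{1\bullet}}(D|_C)$, and since the containment of Remark \ref{rem-restric okbd} only bounds $\okbd_{Y_{1\bullet}}(D)$ from above by $\okbd_{Y_{1\bullet}}(D|_C)$, your argument terminates without a conclusion (the theorem is of course still true here: every $D'\in|D|_\R$ satisfies $D'\geq E$, so $\nu_2(D')\geq \ord_x(E|_C)-\nu_1(D')=1-\nu_1(D')$, and the origin is excluded).

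The fix is essentially to abandon the restriction of the divisor class and work instead with the actual restricted divisors $D'|_{Y_k}$ for $D'\in|D|_\R$, which is what the paper does: writing $\nu_{k+1}(D')=\ord_{Y_{k+1}}(D'|_{Y_k})-(\nu_1+\cdots+\nu_k)$ by telescoping, and using $\ord_{Y_{k+1}}(D'|_{Y_k})\geq\ord_{Y_{k+1}}(D')\geq\ord_{Y_{k+1}}(||D||)>0$ (an inequality computed on $X$, not on $Y_k$), one sees that every valuative point with $\nu_1,\ldots,\nu_k<\eps\leq\frac{1}{2k}\ord_{Y_{k+1}}(||D||)$ has $\nu_{k+1}\geq\frac12\ord_{Y_{k+1}}(||D||)$, so the slice $\okbd_{Y_\bullet}(D)_{x_1=\cdots=x_k=0}$ stays away from the origin in the $x_{k+1}$-coordinate. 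No induction on dimension, and no statement about $\bm(D|_{Y_k})$, is needed.
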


\begin{proof}
We first treat the case where $D$ is big.
In this case, $\oklim_{Y_\bullet}(D)=\okbd_{Y_\bullet}(D)$.

\noindent$(1)\Rightarrow(2)$: Assume that $x\in\bm(D)$ and fix an admissible flag $Y_\bullet$ centered at $x$. By Lemma \ref{smflag} (and Remark \ref{remk_smflag}), we may assume that every subvariety $Y_i$ from the admissible flag $Y_\bullet$ is smooth.
If $Y_1\subseteq\bm(D)$, then by Lemma \ref{lem-main lemma} (1), the origin of $\R^n$ is not contained in $\okbd_{Y_\bullet}(D)$.
Thus assume that $Y_1\not\subseteq\bm(D)$.
If $k\geq 1$ is the largest integer among $i$ such that $Y_i\not\subseteq\bm(D)$, then $Y_{k+1}\subseteq\bm(D)$ and
by Lemma \ref{lem-main lemma} (2),
$$
S_{k}:=\okbd_{Y_\bullet}(D)_{x_1=\cdots=x_k=0}\neq\emptyset.
$$
We claim that $\inf\{x_{k+1} \mid (0,\cdots,0,x_{k+1},\cdots,x_n)\in S_{k}\}>0$, and hence, the origin of $\R^n$ is not contained in $\okbd_{Y_\bullet}(D)$.
Since $S_{k}\neq\emptyset$ and the valuative points are dense in $\okbd_{Y_\bullet}(D)$, it follows that
for any small $\epsilon>0$, there exists an effective divisor $D'\in|D|_\R$ which defines a valuative point $\nu_{Y_\bullet}(D')=(\nu_1,\cdots,\nu_n)\in\okbd_{Y_\bullet}(D)$ such that
$0\leq \nu_i<\epsilon$ for all integers $i$ satisfying $1\leq i\leq k$.
Let $D'_1:=D'$, $D'_2:=(D'_1-\nu_1Y_1)|_{Y_1}$ and define $D'_i$ inductively as
$$
D'_i:=(D'_{i-1}-\nu_{i-1}Y_{i-1})|_{Y_{i-1}}.
$$
Then we get
$$
\begin{array}{rl}
\nu_{k+1}=\nu_{k+1}(D')&=\ord_{Y_{k+1}}((D'_k-\nu_kY_k)|_{Y_{k}})\\
&=\ord_{Y_{k+1}}(D'_k)-\nu_k\\
&=\ord_{Y_{k+1}}((D'_{k-1}-\nu_{k-1}Y_{k-1})|_{Y_{k-1}})-\nu_k\\
&=\ord_{Y_{k+1}}(D'_{k-1})-\nu_{k-1}-\nu_k\\
&=\cdots\\
&=\ord_{Y_{k+1}}(D'_1)-\nu_1-\nu_2-\cdots -\nu_k.
\end{array}
$$
We have $\ord_{Y_{k+1}}(||D||)>0$ since $Y_{k+1}\subseteq\bm(D)$.
Since $\epsilon$ can be arbitrarily small, we may suppose that $0<\epsilon<\frac{1}{2k}\ord_{Y_{k+1}}(||D||)$. Since $0\leq \nu_i<\epsilon$ for $1\leq i\leq k$,
we obtain
$$
\begin{array}{rl}
\nu_{k+1}=\nu_{k+1}(D')&=\ord_{Y_{k+1}}(D'_1)-(\nu_1+\cdots +\nu_k)\\
&>\ord_{Y_{k+1}}(D'_1)-k\cdot\frac{1}{2k}\ord_{Y_{k+1}}(||D||)\\
&\geq\ord_{Y_{k+1}}(||D||)-\frac12\ord_{Y_{k+1}}(||D||)\\
&=\frac12\ord_{Y_{k+1}}(||D||).
\end{array}
$$
Therefore, we obtain
$$
\inf\{x_{k+1} \mid (0,\cdots,0,x_{k+1},\cdots,x_n)\in S_{k}\}\geq\frac12\ord_{Y_{k+1}}(||D||)>0.
$$
This implies that the origin of $\R^n$ is not contained in $\okbd_{Y_\bullet}(D)$.

\noindent$(2)\Rightarrow(3)$: Obvious.

\noindent$(3)\Rightarrow(1)$:
Assume that there exists an admissible flag $Y_{\bullet}$ centered at $x$ such that
the origin of $\R^n$ is not contained in $\okbd_{Y_\bullet}(D)$.
To derive a contradiction, suppose that $x\not\in\bm(D)$.
By letting $k=n$ in Lemma \ref{lem-main lemma} (2), we obtain a contradiction. Thus we have proved this theorem for big divisors.

We now turn to the proof for the case where $D$ is pseudoeffective. Fix an ample divisor $A$ on $X$. First, observe that
$$
\bm(D)=\bigcup_{\epsilon >0} \bm(D+\epsilon A).
$$
Thus $x \in \bm(D)$ if and only if $x \in \bm(D+\epsilon A)$ for all sufficiently small $\epsilon >0$.
Since $\oklim_{Y_\bullet}(D)=\bigcap_{\epsilon >0} \okbd_{Y_\bullet}(D+\epsilon A)$, the assertion easily follows from the case where $D$ is big.
\end{proof}

Note that a pseudoeffective divisor $D$ is nef if and only if $\bm(D)=\emptyset$. Thus we immediately obtain the following nefness criterion.

\begin{corollary}\label{cor-nefness}
Let $D$ be a  divisor on $X$.
Then the following are equivalent.
\begin{enumerate}
 \item[$(1)$] $D$ is nef.

 \item[$(2)$] For any admissible flag $Y_\bullet$, the origin of $\R^n$ is contained in $\oklim_{Y_\bullet}(D)$.

 \item[$(3)$] For any point $x \in X$, there exists an admissible flag $Y_\bullet$ centered at $x$ such that the origin of $\R^n$ is contained in $\oklim_{Y_\bullet}(D)$.
\end{enumerate}
\end{corollary}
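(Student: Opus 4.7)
The plan is to deduce the corollary as a direct consequence of Theorem \ref{thrm-B-criterion} together with the characterization recalled in Subsection \ref{asybasubsec} that a pseudoeffective divisor $D$ on $X$ is nef if and only if $\bm(D)=\emptyset$. Every admissible flag $Y_\bullet$ is centered at the unique point $x=Y_n$, so the statements (2) and (3) are really assertions indexed over all points of $X$.

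For $(1)\Rightarrow(2)$, I would assume $D$ is nef, so that $\bm(D)=\emptyset$. Given any admissible flag $Y_\bullet$, let $x=Y_n$; since $x\notin\bm(D)$, the contrapositive of the implication $(2)\Rightarrow(1)$ in Theorem \ref{thrm-B-criterion} shows that the origin of $\R^n$ lies in $\oklim_{Y_\bullet}(D)$. The implication $(2)\Rightarrow(3)$ is immediate, since any point $x\in X$ can be realized as $Y_n$ for some admissible flag, so a universal statement over all flags specializes to the existential statement at each point. For $(3)\Rightarrow(1)$, I would argue by contradiction: if $D$ is not nef, then $\bm(D)\neq\emptyset$, so there exists a point $x\in\bm(D)$; applying the implication $(1)\Rightarrow(3)$ (contrapositive) of Theorem \ref{thrm-B-criterion}, no admissible flag $Y_\bullet$ centered at $x$ has the origin in $\oklim_{Y_\bullet}(D)$, contradicting (3). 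Hence $\bm(D)=\emptyset$ and $D$ is nef.

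There is essentially no obstacle here once Theorem \ref{thrm-B-criterion} is in hand: the corollary is a pointwise-to-global repackaging, and the only mild subtlety is to notice that asking the origin to lie in $\oklim_{Y_\bullet}(D)$ for \emph{every} admissible flag is equivalent to asking, for every $x\in X$, that this hold for \emph{every} flag centered at $x$, which together with (3) being an existential statement at each $x$ is exactly the structure already built into Theorem \ref{thrm-B-criterion}.
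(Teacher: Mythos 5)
Your proof is correct and takes exactly the paper's approach: the paper simply observes that $D$ is nef if and only if $\bm(D)=\emptyset$ and declares the corollary immediate from Theorem \ref{thrm-B-criterion}, which is precisely the pointwise-to-global repackaging you spell out. One minor slip: in $(1)\Rightarrow(2)$ you should invoke the contrapositive of the theorem's $(3)\Rightarrow(1)$ (to get the origin in the body for \emph{every} flag centered at $x$), and in $(3)\Rightarrow(1)$ you should invoke the theorem's $(1)\Rightarrow(2)$ (to exclude the origin for \emph{every} flag centered at $x\in\bm(D)$); as stated your citations only yield existential conclusions, though since the theorem is a full three-way equivalence the argument is unaffected.
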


Next we prove the movability criterion of divisor.
Recall that a divisor $D$ is \emph{movable} if $\bm(D)$ has no irreducible components of dimension $n-1$.

\begin{theorem}\label{thrm-movability}
Let $D$ be a pseudoeffective divisor on $X$.
The following are equivalent:
\begin{enumerate}
\item[$(1)$] $D$ is movable.
\item[$(2)$] For any admissible flag $Y_\bullet$ on $X$, we have  $\oklim_{Y_\bullet}(D)_{x_1=0}\neq\emptyset$.
\end{enumerate}
\end{theorem}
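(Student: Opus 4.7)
The plan is to translate movability into a geometric statement about the first coordinate of the limiting Okounkov body, and then to access the slice $\oklim_{Y_\bullet}(D)_{x_1=0}$ through the big approximations $D+\eps A$ for a fixed ample divisor $A$.

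For the implication $(1)\Rightarrow(2)$, fix an admissible flag $Y_\bullet$ on $X$. Since $D$ is movable, its divisorial Zariski decomposition has trivial negative part, so $\ord_E(||D||)=0$ for every prime divisor $E$; in particular $Y_1\not\subseteq\bm(D)$. Because $\bm(D+\eps A)\subseteq\bm(D)$ for every $\eps>0$, we also have $Y_1\not\subseteq\bm(D+\eps A)$, and Lemma \ref{lem-main lemma}(2) applied to the big divisor $D+\eps A$ with $k=1$ yields $\okbd_{Y_\bullet}(D+\eps A)_{x_1=0}\neq\emptyset$. To extract a point of the intersection $\oklim_{Y_\bullet}(D)_{x_1=0}=\bigcap_{\eps>0}\okbd_{Y_\bullet}(D+\eps A)_{x_1=0}$, I would verify that this family of nonempty compact convex slices is nested-decreasing as $\eps\to 0^+$: for $0<\eps_1<\eps_2$, writing $D+\eps_2A=(D+\eps_1A)+(\eps_2-\eps_1)A$ and combining Lemma \ref{lem-okbd sum} with the inclusion $0\in\okbd_{Y_\bullet}((\eps_2-\eps_1)A)$ (which follows from Corollary \ref{cor-nefness} applied to the ample, hence nef, divisor $(\eps_2-\eps_1)A$) gives $\okbd_{Y_\bullet}(D+\eps_1A)\subseteq\okbd_{Y_\bullet}(D+\eps_2A)$, and the intersection of a nested decreasing family of nonempty compact sets is nonempty.

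For $(2)\Rightarrow(1)$ I argue by contrapositive. If $D$ is not movable, there exists a prime divisor $E$ with $c:=\ord_E(||D||)>0$; choose an admissible flag $Y_\bullet$ with $Y_1=E$. Exactly as in the proof of Lemma \ref{lem-main lemma}(1), every point $(x_1,\ldots,x_n)\in\okbd_{Y_\bullet}(D+\eps A)$ satisfies $x_1\geq\ord_{Y_1}(||D+\eps A||)$. Since $\ord_{Y_1}(||A||)=0$, subadditivity of the asymptotic valuation forces $\eps\mapsto\ord_{Y_1}(||D+\eps A||)$ to be nonincreasing, so its supremum over $\eps>0$ equals $\lim_{\eps\to 0^+}\ord_{Y_1}(||D+\eps A||)=\ord_{Y_1}(||D||)=c$. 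Consequently every point of $\oklim_{Y_\bullet}(D)$ satisfies $x_1\geq c>0$, hence $\oklim_{Y_\bullet}(D)_{x_1=0}=\emptyset$, contradicting $(2)$.

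The main obstacle is the nesting argument in $(1)\Rightarrow(2)$: one must ensure that the decreasing family of slices has nonempty intersection, which is what forces the combination of Lemma \ref{lem-okbd sum} with the fact that the origin lies in the Okounkov body of any ample divisor. Everything else reduces cleanly to the already-established dictionary between divisorial components of $\bm(D)$, the asymptotic valuation $\ord_{Y_1}(||D||)$, and the behavior of the first coordinate on $\okbd_{Y_\bullet}$.
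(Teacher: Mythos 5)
Your proof is correct and follows essentially the same route as the paper's: Lemma \ref{lem-main lemma}(2) for $(1)\Rightarrow(2)$, and the lower bound $\nu_1(D')\geq\ord_{Y_1}(\lvert\lvert D\rvert\rvert)>0$ on a flag with $Y_1=E$ for $(2)\Rightarrow(1)$. The only difference is that you spell out the passage from the big approximations $D+\eps A$ to the limiting body (nested compact slices, monotonicity of $\ord_{Y_1}(\lvert\lvert D+\eps A\rvert\rvert)$), which the paper dismisses as "easy and left to the readers"; your treatment of that step is valid.
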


\begin{proof}
As in the proof of Theorem \ref{thrm-B-criterion}, we prove the case when $D$ is big.
The pseudoeffective case is easy and left to the readers.

\noindent$(1)\Rightarrow(2)$:
Fix an arbitrary admissible flag $Y_\bullet$ of $X$.
If $D$ is movable, then $Y_1\not\subseteq\bm(D)$.
By Lemma \ref{lem-main lemma} (2), we have
$\okbd_{Y_\bullet}(D)_{x_1=0}\neq\emptyset$.

\noindent$(2)\Rightarrow (1)$
Suppose that $D$ is not movable. Then there exists a prime divisor $E\subseteq\bm(D)$.
Take an admissible flag $Y_\bullet$ such that $Y_1=E$.
Then for any effective divisor $D'$ such that $D'\sim_\R D$,
we have
$$
\nu_1(D')=\ord_{Y_1}(D')\geq\ord_{Y_1}(||D||)>0.
$$
Thus $\okbd_{Y_\bullet}(D)_{x_1=0}=\emptyset$, which is a contradiction.
\end{proof}

\begin{corollary}\label{cor-nefness in dim2}
If $X$ is a surface, then in addition to the conditions $(1)$,$(2)$, and $(3)$ in Corollary \ref{cor-nefness},
the following condition is also equivalent:
\begin{enumerate}
\item[$(4)$] The \Obody $\oklim_{Y_\bullet}(D)$ with respect to any admissible flag $Y_\bullet$ intersects the $x_2$-axis of the plane $\R^2$.
\end{enumerate}
\end{corollary}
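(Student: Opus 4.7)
The plan is to leverage the movability criterion of Theorem \ref{thrm-movability} together with the surface-specific fact that on a surface, a movable divisor is automatically nef. Since we are working on a surface ($n=2$), the $x_2$-axis of $\R^2$ is exactly the locus $\{x_1 = 0\}$, so condition $(4)$ is precisely the condition $\oklim_{Y_\bullet}(D)_{x_1=0}\neq\emptyset$ for every admissible flag $Y_\bullet$.

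First I would prove $(1)\Rightarrow(4)$. Assuming $D$ is nef, Corollary \ref{cor-nefness} gives that the origin of $\R^2$ lies in $\oklim_{Y_\bullet}(D)$ for every admissible flag $Y_\bullet$. Since the origin sits on the $x_2$-axis, we immediately obtain $\oklim_{Y_\bullet}(D)\cap\{x_1=0\}\neq\emptyset$, which is $(4)$.

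For $(4)\Rightarrow(1)$, I would invoke Theorem \ref{thrm-movability}: condition $(4)$ says exactly $\oklim_{Y_\bullet}(D)_{x_1=0}\neq\emptyset$ for every admissible flag $Y_\bullet$, so $D$ is movable. Now I use the dimension-two hypothesis: since $\dim X = 2$, the irreducible components of $\bm(D)$ have dimension at most $1$. Movability rules out divisorial (i.e., $1$-dimensional) components, so $\bm(D)$ consists only of isolated points. But we recalled in Subsection \ref{asybasubsec} that $\bm(D)$ never contains isolated points, hence $\bm(D) = \emptyset$, and so $D$ is nef.

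There is no real obstacle here beyond matching the geometric picture to the criteria already established; the argument is just a short assembly of Corollary \ref{cor-nefness}, Theorem \ref{thrm-movability}, and the standard fact about $\bm(D)$ having no isolated points, with the surface hypothesis collapsing ``movable'' to ``nef''. The only thing to be careful about is the identification of the $x_2$-axis with $\{x_1=0\}$ in our coordinates on $\R^2$, which is consistent with the convention used to define $\nu_{Y_\bullet}$ in Section \ref{oksec}.
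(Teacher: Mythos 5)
Your proposal is correct and follows essentially the same route as the paper: identify condition $(4)$ with the movability criterion of Theorem \ref{thrm-movability} and then use that on a surface a movable divisor is nef (which you justify, correctly, via the absence of isolated points in $\bm(D)$, a step the paper simply asserts).
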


\begin{proof}
The condition $(4)$ is the movability condition for divisors on a surface.
On a surface, a divisor is movable if and only if it is nef.
\end{proof}

\section{Divisorial Zariski decompositions via Okounkov bodies}\label{divzdsec}
In this section, we compute the divisorial Zariski decomposition of a big divisor using the limiting Okounkov bodies.
The Zariski decomposition plays a crucial role in computing the Okounkov body of a big divisor in the surface case (see \cite[Theorem 6.4]{lm-nobody}).
As before, $X$ is a smooth projective variety of dimension $n$.

We first define the following set for a pseudoeffective divisor $D$ on $X$:
$$
div(\oklim(D)):=\left\{E ~~\left |
\begin{array}{l}
\text{ $Y_\bullet$ is an admissible flag with $Y_1=E$}\\
\text{ such that } \oklim_{Y_\bullet}(D)_{x_1=0}=\emptyset.
\end{array} \right.\right\}.
$$

\begin{lemma}\label{lem-n(okbd)}
The set $div\oklim(D)$ is finite, and $\#(div\oklim(D))=0$ if and only if $D$ is movable.
\end{lemma}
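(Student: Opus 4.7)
The plan is to reduce both assertions to the single equivalence
\[
E \in div\oklim(D) \iff \ord_E(||D||) > 0.
\]
Granted this, the set $div\oklim(D)$ coincides with the set of prime divisorial components of the negative part $N$ of the divisorial Zariski decomposition $D = P + N$. Since the summation defining $N$ has only finitely many terms (Subsection \ref{divzdsubsec}), $div\oklim(D)$ is finite; and $\#div\oklim(D) = 0$ is equivalent to $N = 0$, hence to $\bm(D)$ having no divisorial component, hence to $D$ being movable.

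The proof of the equivalence starts by noting that whether $\oklim_{Y_\bullet}(D)_{x_1=0}$ is empty depends only on $Y_1 = E$, because the first-coordinate function $\nu_1(D') = \ord_E(D')$ is determined by $Y_1$ alone; so the ``some flag'' quantifier in the definition of $div\oklim(D)$ may equivalently be read as ``every flag'' with $Y_1 = E$. For big $D$, the infimum of the first coordinate on $\okbd_{Y_\bullet}(D)$ equals $\inf_{D' \in |D|_{\R}} \nu_1(D') = \inf_{D' \in |D|_\R} \ord_E(D') = \ord_E(||D||)$, since taking the closure of the convex hull preserves the infimum of a coordinate functional; hence the slice at $x_1 = 0$ is empty precisely when $\ord_E(||D||) > 0$. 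This is essentially the computation already carried out in the proof of Theorem \ref{thrm-movability}.

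For pseudoeffective $D$ I transfer from the big case using $\oklim_{Y_\bullet}(D) = \bigcap_{\epsilon > 0} \okbd_{Y_\bullet}(D + \epsilon A)$. This family is decreasing as $\epsilon \searrow 0^+$: for $\epsilon_1 < \epsilon_2$, Lemma \ref{lem-okbd sum} yields $\okbd_{Y_\bullet}(D+\epsilon_1 A) + \okbd_{Y_\bullet}((\epsilon_2-\epsilon_1)A) \subseteq \okbd_{Y_\bullet}(D+\epsilon_2 A)$, and since $(\epsilon_2-\epsilon_1)A$ is ample the origin lies in $\okbd_{Y_\bullet}((\epsilon_2-\epsilon_1)A)$ by Corollary \ref{cor-nefness}, so $\okbd_{Y_\bullet}(D+\epsilon_1A) \subseteq \okbd_{Y_\bullet}(D+\epsilon_2A)$. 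By Cantor's intersection theorem the decreasing intersection $\oklim_{Y_\bullet}(D)_{x_1=0} = \bigcap_{\epsilon > 0}\okbd_{Y_\bullet}(D+\epsilon A)_{x_1=0}$ of compact convex sets is empty iff some $\okbd_{Y_\bullet}(D+\epsilon_0 A)_{x_1=0}$ is empty, iff $\ord_E(||D+\epsilon_0 A||) > 0$ for some $\epsilon_0 > 0$; by the monotone convergence $\ord_E(||D+\epsilon A||) \nearrow \ord_E(||D||)$ as $\epsilon \searrow 0^+$ (which follows from subadditivity of asymptotic vanishing together with $\ord_E(||A||) = 0$), this is equivalent to $\ord_E(||D||) > 0$.

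I do not anticipate a substantive obstacle: the lemma is essentially a dictionary translating the support and finiteness of $N$ into the language of limiting Okounkov bodies, and the main computational work has already been done in the proof of Theorem \ref{thrm-movability}. The only point that needs a little care is the pseudoeffective-to-big reduction via the decreasing intersection, but this follows cleanly from the monotonicity provided by Lemma \ref{lem-okbd sum} and Corollary \ref{cor-nefness} combined with standard compactness.
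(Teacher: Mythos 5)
Your proposal is correct and follows essentially the same route as the paper: the paper's proof is the one-line observation that $\ord_E(||D||)=\min\{x_1\mid (x_1,\dots,x_n)\in\oklim_{Y_\bullet}(D)\}$ for any flag with $Y_1=E$, which identifies $div\oklim(D)$ with the set of divisorial components of $\bm(D)$ (equivalently, the components of $N$), finite by Nakayama, with the second assertion coming from Theorem \ref{thrm-movability}. You merely supply the details the paper leaves implicit, in particular the Cantor-intersection/monotone-limit argument reducing the pseudoeffective case to the big case, and these details are sound.
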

\begin{proof}
Note that for an admissible flag $Y_\bullet$ with $Y_1=E$ on $X$, we have
$$\sigma_E(||D||)=\inf\{x_1 \mid (x_1,\cdots,x_n)\in\oklim_{Y_\bullet}(D)\}.$$
Thus $\#(div\oklim(D))$ is the number of divisorial components of $\bm(D)$, so it is finite by \cite[Corollary III.1.11]{nakayama}.
The second statement follows from Theorem \ref{thrm-movability}.
\end{proof}

We now explain how to obtain the divisorial Zariski decomposition of a pseudoeffective divisor $D$ using the limiting Okounkov bodies of $D$.
If $D$ is not movable, then by Theorem \ref{thrm-movability} there exists an admissible flag $Y_\bullet$ with $Y_1=E_1$ such that $\oklim_{Y_\bullet}(D)_{x_1=0}=\emptyset$.
For such an admissible flag $Y_\bullet$, define a positive number
$$a_1:=\inf\{x_1\geq 0 \mid (x_1,\cdots,x_n)\in\oklim_{Y_\bullet}(D)\} >0$$
and consider the divisor $D-a_1E_1$.
Note that $a_1=\ord_{E_1}(||D||)$.

\begin{lemma}\label{lem-translation}
Let $D$ be a pseudoeffective divisor on $X$. If $Y_\bullet$ is an admissible flag on $X$ such that
$Y_1=E_1$ is a divisorial component of $\bm(D)$, then
$$
\oklim_{Y_\bullet}(D-a_1E_1)=\oklim_{Y_\bullet}(D)-(a_1,0,\cdots,0).
$$
\end{lemma}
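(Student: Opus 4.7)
The plan is to handle the big case via a direct valuative argument, and to derive the pseudoeffective case by ample perturbation.

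When $D$ is big, the identity $a_1 = \ord_{E_1}(||D||)$ forces every effective $D' \sim_\R D$ to satisfy $\ord_{E_1}(D') \geq a_1$, so $D' \mapsto D' - a_1 E_1$ is a bijection $|D|_\R \to |D - a_1 E_1|_\R$. With $Y_1 = E_1$, unwinding the recursive definition of $\nu_{Y_\bullet}$ gives $\nu_1(D' - a_1 E_1) = \nu_1(D') - a_1$, and $\nu_i(D' - a_1 E_1) = \nu_i(D')$ for $i \geq 2$ because $(D' - a_1 Y_1 - (\nu_1(D') - a_1) Y_1)|_{Y_1} = (D' - \nu_1(D') Y_1)|_{Y_1}$. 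Passing to closures of convex hulls of valuative points then yields the asserted translation of Okounkov bodies when $D$ is big.

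For pseudoeffective $D$, fix an ample divisor $A$. Both $D + \epsilon A$ and $D - a_1 E_1 + \epsilon A$ are big for every $\epsilon > 0$, and the bijection argument applied to the pair $D'' \sim_\R D - a_1 E_1 + \epsilon A$ effective and $D'' + a_1 E_1 \sim_\R D + \epsilon A$ yields
\[
\okbd_{Y_\bullet}(D - a_1 E_1 + \epsilon A) + (a_1, 0, \ldots, 0) \;=\; \okbd_{Y_\bullet}(D + \epsilon A) \cap \{x_1 \geq a_1\}.
\]
Intersecting over $\epsilon > 0$ and observing that $\oklim_{Y_\bullet}(D) \subseteq \{x_1 \geq a_1\}$ by the hypothesis $a_1 = \inf\{x_1\}$ on this body, the half-space intersection is redundant and one obtains $\oklim_{Y_\bullet}(D - a_1 E_1) + (a_1, 0, \ldots, 0) = \oklim_{Y_\bullet}(D)$, as required.

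The main obstacle is justifying the displayed identity in the pseudoeffective case. The subtlety is that $\ord_{E_1}(||D + \epsilon A||)$ is strictly less than $a_1$ in general, so $\okbd_{Y_\bullet}(D + \epsilon A)$ contains valuative points with $\nu_1 < a_1$ that are not shifts of points in $\okbd_{Y_\bullet}(D - a_1 E_1 + \epsilon A)$. The inclusion $\supseteq$ is immediate from the valuative bijection, but the opposite inclusion requires a density argument showing that the valuative points of $D + \epsilon A$ with $\nu_1 \geq a_1$ are dense in $\okbd_{Y_\bullet}(D + \epsilon A) \cap \{x_1 \geq a_1\}$, built from the standard density of valuative points in an Okounkov body together with convexity on the half-space.
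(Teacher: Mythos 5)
Your treatment of the big case is exactly the paper's: the identity $|D-a_1E_1|_{\R}=|D|_{\R}-a_1E_1$ (valid since every $D'\in|D|_{\R}$ satisfies $\ord_{E_1}(D')\geq a_1$) combined with the recursion defining $\nu_{Y_\bullet}$ yields the translation. For the pseudoeffective case you take a genuinely different route. The paper applies the big case to $D_\epsilon=D+\epsilon A$ with the $\epsilon$-dependent shift $\ord_{E_1}(||D_\epsilon||)$ and then ``takes $\epsilon\to 0$''; this is shorter but leaves implicit a continuity argument, since the translation vector varies with $\epsilon$ and $\bigcap_{\epsilon}\okbd_{Y_\bullet}(D_\epsilon-\ord_{E_1}(||D_\epsilon||)E_1)$ is not, by definition, a limiting Okounkov body. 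You instead keep the shift fixed at $a_1$ and establish the half-space identity $\okbd_{Y_\bullet}(D-a_1E_1+\epsilon A)+(a_1,0,\cdots,0)=\okbd_{Y_\bullet}(D+\epsilon A)\cap\{x_1\geq a_1\}$, after which the intersection over $\epsilon$ is a clean nested intersection of translates of a fixed body, and the half-space is absorbed because $\oklim_{Y_\bullet}(D)\subseteq\{x_1\geq a_1\}$ by the definition of $a_1$. The price is the density step you flag, and it does go through: any point of $\okbd_{Y_\bullet}(D_\epsilon)$ with $x_1>a_1$ is a limit of valuative points whose first coordinates eventually exceed $a_1$, and the face $x_1=a_1$ is recovered by convexity because $D-a_1E_1+\epsilon A$ is big (so the open half-space slice is nonempty). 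In fact your displayed identity is precisely \cite[Theorem 4.24]{lm-nobody} applied to the big divisor $D+\epsilon A$ at $t=a_1<\mu(D+\epsilon A;E_1)$, so you could simply cite it. Both routes are correct; yours trades the paper's brevity for a limiting step that needs no further justification.
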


\begin{proof}
It is straightforward.
\end{proof}

By Lemma \ref{lem-translation}, it is easy to see that $\oklim_{Y_\bullet}(D-a_1E_1)_{x_1=0}\neq\emptyset$ since
$$\inf\{x_1\geq 0 \mid (x_1,\cdots,x_n)\in\okbd_{Y_\bullet}(D-a_1E_1)\}=\ord_{E_1}(||D-a_1E_1||)=0.$$
We also have
$$\#(div\oklim(D-a_1E_1))=\#(div\oklim(D))-1.$$
We can continue this process by replacing $D$ by $D-a_1E_1$.
Thus after $d=\#(div\oklim(D))$ steps, we arrive at a situation where
$$
\oklim_{Y_\bullet}(D-a_1E_1-\cdots-a_dE_d)_{x_1=0}\neq\emptyset
$$
for all admissible flags $Y_\bullet$ on $X$.
Since $\#(div\oklim(D-a_1E_1-\cdots-a_dE_d))=0$, Lemma \ref{lem-n(okbd)} implies that
$P=D-a_1E_1-\cdots-a_dE_d$ is movable. Thus we obtain the
divisorial Zariski decomposition $D=P+N$ where $N=a_1E_1+\cdots+a_dE_d$.

\section{Augmented base loci via Okounkov bodies}\label{augsec}
In this section, we prove Theorem \ref{main2} and Corollary \ref{maincor2}. More precisely, we extract the augmented base locus $\bp(D)$ of a big divisor $D$ from its associated Okounkov bodies.
Throughout this section, $X$ is a smooth projective variety of dimension $n$.

First, we prove some easy lemmas.

\begin{lemma}\label{ampleorigin}
Let $D$ be a nef and big divisor on $X$, and fix an admissible flag $Y_\bullet$ on $X$ such that $x \not\in \bp(D)$. Then $U_{\geq 0}$ is contained in $\okbd_{Y_\bullet}(D)$ for some small open neighborhood $U$ of the origin of $\R^n$.
\end{lemma}

\begin{proof}
By Lemma \ref{smflag} (and Remark \ref{remk_smflag}), we may assume that every subvariety $Y_i$ in the admissible flag $Y_\bullet$ is smooth.
We prove by induction on the dimension $\dim X=n$.
The case $n=1$ is clear since we have $\okbd_{Y_\bullet}(D)=[0, \deg D]$ (\cite[Example 1.14]{lm-nobody}) and $\deg D > 0$. Now assume that $n \geq 2$.
By Theorem \ref{thrm-okbd slice}, $\okbd_{Y_\bullet}(D)_{x_1=0} = \okbd_{Y_{1\bullet}}(D)$.  Furthermore, since $D$ is nef and big, we have $\okbd_{Y_{1\bullet}}(D) = \okbd_{Y_{1\bullet}}(D|_{Y_1})$.
Note that $D|_{Y_1}$ is nef and big and $x \not\in \bp(D|_{Y_1})$.
By the induction hypothesis, there exists a simplex  $\blacktriangle_{\lambda'}\subseteq\R^{n-1}$ of length
$\lambda'=(\lambda_2,\cdots,\lambda_n)$ with all $\lambda_i>0$ such that $\blacktriangle_{\lambda'}\subseteq\okbd_{Y_\bullet}(D)_{x_1=0}$.
On the other hand, we can find a decomposition $D=A+E$ into an ample divisor $A$ and an effective divisor $E$ such that $x \not\in \Supp(E)$. By Lemma \ref{lem-okbd sum} (and Remark \ref{remk_additivity}), we have $\okbd_{Y_\bullet}(A) \subseteq \okbd_{Y_\bullet}(D)$.
Since $A - \epsilon Y_1$ is ample for all sufficiently small $\epsilon > 0$, Corollary \ref{maincor1} implies that the origin of $\R^n$ is contained in $ \okbd_{Y_\bullet}(A-\epsilon Y_1)$. By \cite[Theorem 4.26]{lm-nobody}, we have $(\epsilon, 0, \ldots, 0) \in \okbd_{Y_\bullet}(A)$, and hence, $(\epsilon, 0, \ldots, 0) \in \okbd_{Y_\bullet}(D)$.
By the convexity of $\okbd_{Y_\bullet}(D)$, the simplex  $\blacktriangle_{\lambda} \subseteq \R^n$ of length $\lambda = (\epsilon, \lambda_2, \cdots, \lambda_n)$ is contained in $\okbd_{Y_\bullet}(D)$.
\end{proof}

\begin{lemma}\label{easylem}
Let $D$ be a big divisor on $X$, and fix an admissible flag $Y_\bullet$ centered at a point $x$ on $X$.
If there is an irreducible curve $C$ passing through $x$ such that $\frac{D\cdot C}{\mult_x C} < \epsilon$ for some $\epsilon >0$ and $C \not\subseteq Y_1$, then $C \subseteq \bm(D - \epsilon Y_1)$.
Furthermore, $(\epsilon , 0, \cdots, 0) \not\in \okbd_{Y_\bullet}(D)$.
\end{lemma}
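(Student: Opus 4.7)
The plan is to handle the two assertions in order. For the inclusion $C\subseteq\bm(D-\eps Y_1)$, a short intersection-number calculation suffices, and for the second part, I would argue by contradiction, combining valuative approximation with Theorem \ref{thrm-B-criterion}. For the first assertion: since $Y_1$ is smooth at $x$ we have $\mult_x Y_1=1$, so for $C\not\subseteq Y_1$ passing through $x$ the local intersection number satisfies $(Y_1\cdot C)_x\geq\mult_x C$; hence $Y_1\cdot C\geq\mult_x C$. Together with the hypothesis $D\cdot C<\eps\cdot\mult_x C$, this yields
\[
(D-\eps Y_1)\cdot C\;\leq\;D\cdot C-\eps\cdot\mult_x C\;<\;0.
\]
Fixing an ample $A$, for every sufficiently small rational $\delta>0$ we still have $(D-\eps Y_1+\delta A)\cdot C<0$, so every effective member of the linear system $|m(D-\eps Y_1+\delta A)|$ contains $C$ in its support. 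Consequently $C\subseteq\mathrm{SB}(D-\eps Y_1+\delta A)\subseteq\bm(D-\eps Y_1)$.

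For the second assertion, suppose for contradiction that $(\eps,0,\ldots,0)\in\oklim_{Y_\bullet}(D)$. Fixing ample $A$ and using $\oklim_{Y_\bullet}(D)=\bigcap_{\delta>0}\okbd_{Y_\bullet}(D+\delta A)$, the density of valuative points in the Okounkov body of a big divisor yields $D_k\in|D+\delta A|_\R$ with $\nu_{Y_\bullet}(D_k)\to(\eps,0,\ldots,0)$. Let $b_k:=\ord_{Y_1}(D_k)\to\eps$ and set
\[
E_k\;:=\;D_k-\min(b_k,\eps)\,Y_1\;\in\;|D+\delta A-\min(b_k,\eps)Y_1|_\R.
\]
Then $E_k$ is effective, and a direct computation shows $\nu_{Y_\bullet}(E_k)\to 0$: the first coordinate of $\nu(E_k)$ equals $\max(0,b_k-\eps)\to 0$, while the later coordinates agree with $\nu_i(D_k)\to 0$ for $i\geq 2$. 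The key ingredient is the elementary multiplicity bound $\mult_x F\leq\sum_{i=1}^n\nu_i(F)$ valid for any effective divisor $F$, proved by induction on $n$ from the decomposition $F=\nu_1(F)Y_1+F'$ combined with $\mult_x F=\nu_1(F)+\mult_x F'$ and $\mult_x F'\leq\mult_x(F'|_{Y_1})$ (both consequences of $Y_1$ being smooth at $x$). Applying it forces $\mult_x E_k\to 0$. Thus $\{E_k\}$ gives effective representatives of classes converging to $D-\eps Y_1+\delta A$ whose multiplicity at $x$ tends to zero, and letting $\delta\to 0$ yields $x\notin\bm(D-\eps Y_1)$, contradicting the first part.

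The main obstacle is the final limiting step: passing from ``$\mult_x E_k\to 0$ with $E_k$ of classes only approximately equal to $D-\eps Y_1+\delta A$'' to the sharp conclusion $x\notin\bm(D-\eps Y_1)$. This requires either continuity of the asymptotic multiplicity on the big cone combined with the characterization $x\notin\bm(D')\Leftrightarrow\mult_x\|D'\|=0$, or, more intrinsically, Hausdorff continuity of the limiting Okounkov body together with a direct application of Theorem \ref{thrm-B-criterion}. This careful limiting is where the bulk of the technical work will reside.
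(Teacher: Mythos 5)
Your first assertion is handled exactly as in the paper: $\mult_x C\leq Y_1\cdot C$ (since $Y_1$ is smooth at $x$) gives $(D-\eps Y_1)\cdot C<0$ and hence $C\subseteq\bm(D-\eps Y_1)$; your extra justification via $\text{SB}(D-\eps Y_1+\delta A)$ is just the standard argument behind the paper's one-liner. For the second assertion you take a genuinely different, much more hands-on route. The paper's proof is two lines: since $x\in C\subseteq\bm(D-\eps Y_1)$, Theorem \ref{thrm-B-criterion} says the origin is not in $\oklim_{Y_\bullet}(D-\eps Y_1)$, and the translation property of Okounkov bodies (the slice identity of \cite[Theorem 4.24]{lm-nobody}, in the spirit of Lemma \ref{lem-translation}) then forces $(\eps,0,\cdots,0)\notin\oklim_{Y_\bullet}(D)$. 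Your argument re-proves that translation step by hand — the construction $E_k=D_k-\min(b_k,\eps)Y_1$ with the correct case split on whether $b_k\geq\eps$ is precisely the care the translation lemma encapsulates — and then re-derives the relevant direction of Theorem \ref{thrm-B-criterion} via the bound $\mult_x F\leq\sum_i\nu_i(F)$, whose inductive proof you sketch correctly. This buys self-containedness at the price of the limiting step you flag at the end, and there is one genuine wrinkle there: the classes $D+\delta A-\min(b_k,\eps)Y_1$ converge to $D+\delta A-\eps Y_1$, which need not be big, so ``continuity of $\mult_x\|\cdot\|$ on the big cone'' does not apply verbatim. The fix is to absorb the error into a second ample perturbation: for large $k$ the class $\delta A-(\eps-\min(b_k,\eps))Y_1$ is ample, so adding to $E_k$ a general effective representative of it avoiding $x$ produces effective representatives of $D+2\delta A-\eps Y_1$ of arbitrarily small multiplicity at $x$, whence $x\notin\bm(D+2\delta A-\eps Y_1)$ for every $\delta>0$ and therefore $x\notin\bm(D-\eps Y_1)$, the desired contradiction. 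Equivalently (and closer to the paper), those same divisors exhibit valuative points of $\okbd_{Y_\bullet}(D+2\delta A-\eps Y_1)$ tending to the origin, so the origin lies in $\oklim_{Y_\bullet}(D-\eps Y_1)$ and Theorem \ref{thrm-B-criterion} finishes the job; with that repair your proposal is correct.
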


\begin{proof}
If $D - \epsilon Y_1$ is not pseudoeffective, then $\bm(D-\epsilon Y_1)=X$, and there is nothing to prove. Thus we may assume that $D- \epsilon Y_1$ is pseudoeffective.
Note that $ \mult_x C \leq Y_1\cdot C$. Thus we obtain
$$
(D - \epsilon Y_1)\cdot C \leq D\cdot C - \epsilon \mult_x C < 0.
$$
For an ample divisor $A$ and a sufficiently small number $\delta > 0$, we still have $(D-\epsilon Y_1 + \delta A) \cdot C <0$. Thus $C \subseteq \text{SB}(D-\epsilon Y_1 + \delta A)$, and hence, $C \subseteq \bm(D - \epsilon Y_1)$.

For the last statement, we suppose that $(\epsilon, 0, \cdots, 0) \in \okbd_{Y_\bullet}(D)$.
By Lemma \ref{lem-okbd sum} (and Remark \ref{remk_additivity}) and Lemma \ref{ampleorigin}, for the same $\delta>0$ as above and any sufficiently small $\epsilon' \geq 0$, we have $(\epsilon + \epsilon', 0, \cdots, 0) \in \okbd_{Y_\bullet}(D+\delta A)$.
Recall that the valuative points are dense in the Okounkov body. Thus we can take a sequence of valuative points $\{ ( x_1^i, \cdots, x_n^i) \}_i$ of $\okbd_{Y_\bullet}(D+\delta A)$ such that $\lim_{i \to \infty} (x_1^i, \cdots, x_n^i) \to (\epsilon, 0, \cdots, 0)$ and $x_1^i \geq \epsilon$. Since $(x_1^i - \epsilon, x_2^i, \cdots, x_n^i)$ is a valuative point of $\okbd_{Y_\bullet}(D-\epsilon Y_1 + \delta A)$ for every $i$, the origin of $\R^n$ is contained in $\okbd_{Y_\bullet}(D-\epsilon Y_1 + \delta A)$. However, since $\delta > 0$ is sufficiently small, we may assume that $x \in C \subseteq \bm(D-\epsilon Y_1 + \delta A)$. Thus Theorem \ref{main1} implies that the origin of $\R^n$ is not contained in $\okbd_{Y_\bullet}(D-\epsilon Y_1 + \delta A)$, so we get a contradiction.
\end{proof}

The following is the main ingredient of the proof of Theorem \ref{main2}.

\begin{theorem}\label{thrm-movSesha}
Let $D$ be a big divisor on $X$, and $x \in X$ be a point such that $x\in\bp(D) \setminus \bm(D)$.
Fix an ample divisor $A$ on $X$ and any sufficiently small $ \epsilon > 0$. Then there exists $\delta_0>0$ such that for any $\delta$ satisfying $0\leq\delta\leq \delta_0$, there exist a birational morphism $f: \widetilde{X} \to X$ with $\widetilde{X}$ smooth which is isomorphic over a neighborhood of $x$ and an irreducible curve $C$ on $\widetilde{X}$ passing through $x':=f^{-1}(x)$ such that
$$
0 \leq \frac{P\cdot C}{\mult_{x'}C}<\epsilon
$$
where $f^*(D + \delta A) = P+N$ is the divisorial Zariski decomposition.
\end{theorem}

\begin{proof}
Fix a sufficiently small number $\epsilon > 0$, and take a sufficiently small number $\delta_0 > 0$. First, we explain how to find a birational morphism $f : \widetilde{X} \to X$ and an irreducible curve $C$ on $\widetilde{X}$, and prove the upper bound in the statement. Fix any sufficiently small number $\delta$ with $0< \delta \leq \delta_0$.
Note that $x \not\in \bp(D+\delta A)$.
As in \cite[Proposition 3.7]{lehmann-nu}, we can take a birational morphism $f : \widetilde{X} \to X$ from a smooth variety $\widetilde{X}$ which is a resolution of the base ideal $\mathfrak{b}(|\lfloor m(D+\delta A) \rfloor|)$ and the asymptotic multiplier ideal $\mathcal{J}(||m(D+ \delta A)||)$ for a sufficiently large integer $m > 0$. Then $f$ is centered in $\bp(D+\delta A)$, and hence, it is isomorphic over a neighborhood of $x$. Let $x':=f^{-1}(x)$.
Since $f$ resolves the base ideal $\mathfrak{b}(|\lfloor m(D+\delta A) \rfloor|)$, we have a decomposition $f^*(\lfloor m(D+ \delta A) \rfloor )=M'+F'$ into the base point free divisor $M'$ and the fixed part $F'$. Let $M := \frac{1}{m}M', F := \frac{1}{m}F'$ and $f^*(D+\delta A) = P+N$ be the divisorial Zariski decomposition.
For a sufficiently positive very ample $\Z$-divisor $H'$ on $X$, take an effective $\Z$-divisor $G$ such that $G \equiv \lfloor b(D + \delta A) \rfloor - (K_X + (n+1)H')$ where $b > 0$ is sufficiently large integer.
Note that $H'$ and $b$ can be chosen independently of $m$.
Since the base locus of $|G|$ is contained in $\bp(D+\delta A)$, we may assume that $x \not\in \Supp(G)$.
In \cite[Proof of Proposition 3.7]{lehmann-nu}, Lehmann actually shows that
$M \leq P \leq M + \frac{1}{m}f^*G$.
It is instructive to remark that the main ingredients of \cite[Proof of Proposition 3.7]{lehmann-nu} are Nadel vanishing theorem, Castelnuovo-Mumford regularity (see \cite[Lemma 3.9]{lehmann-nu}), and \cite[Proposition 2.5]{elmnp-asymptotic inv of base}.
We now choose a sufficiently positive ample divisor $H$ on $X$ such that $G+H$ is ample and $x \not\in \Supp(G+H)$. Then we obtain
$$
M \leq P \leq M + \frac{1}{m}f^*(G+H).
$$

Note that $\eps(||D||;x)=0$ since $x\in\bp(D)$.  Recall also that $\eps(||\cdot||;x) : N^1(X)_{\R} \to \R_{\geq 0}$ is a continuous function.
Therefore, since $m>0$ is sufficiently large and $\delta>0$ is sufficiently small, we have
$\eps \left( \left| \left| D+ \delta A + \frac{1}{m}(G+H) \right| \right|;x \right) < \epsilon$.
Since $M + \frac{1}{m}f^*(G+H) \leq f^* \left( D+ \delta A + \frac{1}{m}(G+H) \right)$, we have
$\eps \left( M + \frac{1}{m}f^*(G+H); x' \right) < \epsilon$.
Then there exists an irreducible curve $C$ on $\widetilde{X}$ passing through $x'$ such that
$$
\frac{\left( M + \frac{1}{m}f^*(G+H) \right) \cdot C}{\mult_{x'} C} < \epsilon.
$$
Note that $0 \leq M+\frac{1}{m}f^*(G+H) - P \leq \frac{1}{m}f^*(G+H)$ and $x' \not\in \Supp(f^*(G+H))$. Thus $C \not\subseteq \text{SB}\left( M+\frac{1}{m}f^*(G+H) - P \right)$, and hence, $P \cdot C \leq \left( M + \frac{1}{m}f^*(G+H) \right) \cdot C$. We then obtain
$$
\frac{P\cdot C}{\mult_{x'}C}<\epsilon.
$$
This proves the upper bound in the statement for $\delta >0$.

Now, we consider the divisorial Zariski decomposition $f^*D=P'+N'$. We successively use the above notations. By \cite[Proposition III.1.14]{nakayama}, $P' + f^*(\delta A) \leq P$ and $N' \geq N$.
Since $x \not\in \bm(D)$, it follows that $\Supp(N')$ does not contain $C$. Note that $0\leq N' - N=P - P' - f^*(\delta A)\leq N'$. Thus  $C \not\subseteq \text{SB}( P - P' - f^*(\delta A) )$. This implies that
$P' \cdot C \leq (P' +f^*(\delta A)) \cdot C \leq P \cdot C$,
and hence, we also get
$$
\frac{P'\cdot C}{\mult_{x'}C}<\epsilon.
$$
Thus we have shown the upper bound in the statement for all cases.

For the lower bound in the statement, it is sufficient to prove that $P \cdot C \geq 0$ and $P' \cdot C \geq 0$.
If $P' \cdot C < 0$, then $C \subseteq \bm(P')$, which is a contradiction to the given condition $x \not\in \bm(D)$. Thus $P' \cdot C \geq 0$.
We can also show similarly that $P \cdot C \geq 0$.
\end{proof}

\begin{remark}\label{rmrk_movSesha}
Under the notation of Theorem \ref{thrm-movSesha}, we consider any birational morphism $f' : \widetilde{X}' \to \widetilde{X}$ with $\widetilde{X}'$ smooth which is isomorphic over a neighborhood of $x'$. Let $f'^*f^*(D+\delta A)=P'+N'$ be the divisorial Zariski decomposition, and $C'$ be the strict transform of $C$ on $\widetilde{X}'$. Since the support of the effective divisor $N'-f'^*N = f'^*P - P'$  does not contain $C'$, we have
$$
0 \leq \frac{P' \cdot C'}{\mult_{f'^{-1}(x')}C'} < \epsilon.
$$
Thus we can freely take further blow-ups of $\widetilde{X}$ when we apply Theorem \ref{thrm-movSesha}.
\end{remark}

Now we prove Theorem \ref{main2} as Theorem \ref{thrm-B+criterion}.

\begin{theorem}\label{thrm-B+criterion}
Let $D$ be a pseudoeffective divisor on $X$.
Then the following are equivalent.
\begin{enumerate}
\item[$(1)$] $x \in \bp(D)$.

\item[$(2)$] For any admissible flag $Y_\bullet$ centered at $x$, the subset $U_{\geq0}$ of $\R^n_{\geq 0}$ is not contained in $\oklim_{Y_\bullet}(D)$ for any small open neighborhood $U$ of the origin of $\R^n$.

\item[$(3)$] For some admissible flag $Y_\bullet$ centered at $x$, the subset $U_{\geq0}$ of $\R^n_{\geq 0}$ is not contained in $\oklim_{Y_\bullet}(D)$ for any small open neighborhood $U$ of the origin of $\R^n$.
\end{enumerate}
\end{theorem}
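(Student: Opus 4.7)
The plan is to prove $(2)\Rightarrow(3)$ (which is trivial) and to handle $(3)\Rightarrow(1)$ and $(1)\Rightarrow(2)$ by contrapositive, using Theorem \ref{thrm-movSesha} as the main engine together with the ampleness criterion of Proposition \ref{prop-ampleness} and Lemma \ref{easylem}.

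For $(3)\Rightarrow(1)$, I would suppose $x\notin\bp(D)$. Unwinding the definition of the augmented base locus, there exist a birational morphism $f:\widetilde X\to X$ isomorphic over a neighborhood of $x$ and a decomposition $f^*D=A+E$ with $A$ an ample $\mathbb Q$-divisor and $E$ an effective divisor whose support avoids $x':=f^{-1}(x)$. Given an arbitrary admissible flag $Y_\bullet$ centered at $x$, I lift it through $f$ to an admissible flag $\widetilde Y_\bullet$ on $\widetilde X$ centered at $x'$ via Lemma \ref{smflag}, which preserves the limiting Okounkov body: $\oklim_{\widetilde Y_\bullet}(f^*D)=\oklim_{Y_\bullet}(D)$. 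Proposition \ref{prop-ampleness} applied to the ample $A$ yields a small open neighborhood $V$ of the origin with $V_{\geq 0}\subseteq\okbd_{\widetilde Y_\bullet}(A)$, while $x'\notin\Supp E$ forces $\nu_{\widetilde Y_\bullet}(E)=0$, so $0\in\okbd_{\widetilde Y_\bullet}(E)$. Lemma \ref{lem-okbd sum} then gives $V_{\geq 0}\subseteq\okbd_{\widetilde Y_\bullet}(A)+\okbd_{\widetilde Y_\bullet}(E)\subseteq\okbd_{\widetilde Y_\bullet}(f^*D)=\oklim_{Y_\bullet}(D)$, which is the contrapositive of $(3)\Rightarrow(1)$ and, as a bonus, shows that whenever $x\notin\bp(D)$ some small orthant is contained in the body for \emph{every} flag centered at $x$.

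For $(1)\Rightarrow(2)$, fix $x\in\bp(D)$ and any admissible flag $Y_\bullet$ centered at $x$. If $x\in\bm(D)$, Theorem \ref{thrm-B-criterion} places the origin outside $\oklim_{Y_\bullet}(D)$, so no $U_{\geq 0}$ is contained. Otherwise $x\in\bp(D)\setminus\bm(D)$, and for each $\varepsilon>0$ Theorem \ref{thrm-movSesha} produces a birational morphism $f:Y\to X$ isomorphic over a neighborhood of $x$ and an irreducible curve $C$ through $x':=f^{-1}(x)$ with $P\cdot C/\mult_{x'}C<\varepsilon$, where $P$ is the positive part of $f^*D$. Lifting the flag to $\widetilde Y_\bullet$ on $Y$ gives $\oklim_{\widetilde Y_\bullet}(f^*D)=\oklim_{Y_\bullet}(D)$; since $x\notin\bm(D)$ we also have $x'\notin\Supp N$ for $N=f^*D-P$, so Lemma \ref{okbdzdlem} identifies this with $\oklim_{\widetilde Y_\bullet}(P)$. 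Letting $k$ be the largest index with $C\subseteq\widetilde Y_k$, for $0\leq k\leq n-2$ I apply Lemma \ref{easylem} to $P|_{\widetilde Y_k}$ on $\widetilde Y_k$ with respect to the partial flag $(\widetilde Y_{k+1},\ldots,\widetilde Y_n)$: the inequality $P|_{\widetilde Y_k}\cdot C/\mult_{x'}C<\varepsilon$ together with $C\not\subseteq\widetilde Y_{k+1}$ yields $(\varepsilon,0,\ldots,0)\notin\okbd_{\widetilde Y_{(k+1)\bullet}}(P|_{\widetilde Y_k})$ in $\R^{n-k}$. Combining with the slice formula of Lemma \ref{lem-okbd slice} and Remark \ref{rem-restric okbd} places $(0,\ldots,0,\varepsilon,0,\ldots,0)\notin\oklim_{\widetilde Y_\bullet}(P)$ with $\varepsilon$ in the $(k{+}1)$-st slot, and letting $\varepsilon\to 0$ produces points in $\R^n_{\geq 0}$ arbitrarily close to the origin outside $\oklim_{Y_\bullet}(D)$, so no $U_{\geq 0}$ is contained.

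The main obstacle is the boundary case $k=n-1$, in which $C=\widetilde Y_{n-1}$ and the restriction scheme above collapses because $C$ coincides with the codimension-one flag member on $\widetilde Y_{n-1}$. Mimicking the corresponding step in Proposition \ref{prop-ampleness}, I would pass to the smooth surface $\widetilde Y_{n-2}$, take the Zariski decomposition $f^*D|_{\widetilde Y_{n-2}}=\bar P+\bar N$, and note that $\bar P\cdot C\leq P\cdot C<\varepsilon$; the explicit surface description of \cite[Theorem 6.4]{lm-nobody} then forces the vertical slice of $\okbd_{(\widetilde Y_{n-1},\widetilde Y_n)}(f^*D|_{\widetilde Y_{n-2}})$ at $x_1=0$ to have length at most $\bar P\cdot C<\varepsilon$, so $(0,\varepsilon)$ is not in the surface body, and the slice formula propagates this to $(0,\ldots,0,\varepsilon)\notin\oklim_{\widetilde Y_\bullet}(P)$. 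Two minor technicalities remain: ensuring that $\widetilde Y_k\not\subseteq\bp(P)$ so the restrictions $P|_{\widetilde Y_k}$ are nondegenerate (handled by a small ample perturbation $P+\delta A$ followed by $\delta\to 0^+$), and compatibly lifting the given flag through the $f$ produced by Theorem \ref{thrm-movSesha} (handled by a common resolution). The pseudoeffective-but-not-big case of $D$ itself reduces to the above by running the entire argument on $D+\delta A$ and passing to the limit via the defining intersection $\oklim_{Y_\bullet}(D)=\bigcap_{\delta>0}\okbd_{Y_\bullet}(D+\delta A)$.
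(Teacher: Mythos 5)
Your overall architecture is the same as the paper's: $(3)\Rightarrow(1)$ via Proposition \ref{prop-ampleness} and the Minkowski--sum Lemma \ref{lem-okbd sum} (you use a decomposition $D=A+E$ with $x\notin\Supp E$ where the paper writes $D=(D-\eps A)+\eps A$ and invokes $\bp(D)=\bm(D-\eps A)$; both work), and $(1)\Rightarrow(2)$ via Theorem \ref{thrm-movSesha}, Lemma \ref{easylem} for the case $C\neq\widetilde{Y}_{n-1}$, and a surface computation for the boundary case. The genuine problem is in that boundary case. You take the Zariski decomposition $f^*D|_{\widetilde{Y}_{n-2}}=\bar P+\bar N$ and assert $\bar P\cdot C\leq P\cdot C$. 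Writing $f^*D=P+N$, one has
$$
\bar P\cdot C \;=\; f^*D|_{\widetilde{Y}_{n-2}}\cdot C-\bar N\cdot C \;=\; P\cdot C+\bigl(N|_{\widetilde{Y}_{n-2}}-\bar N\bigr)\cdot C,
$$
so your inequality requires $\bar N\cdot C\geq N|_{\widetilde{Y}_{n-2}}\cdot C$, and there is no reason for this: you only know $x'\notin\Supp N$, so $\Supp N$ may still meet $C$ away from $x'$ and give $N|_{\widetilde{Y}_{n-2}}\cdot C>0$, while $\bar N$ can be much smaller (it vanishes, for instance, if $f^*D|_{\widetilde{Y}_{n-2}}$ happens to be nef). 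Then $\bar P\cdot C>P\cdot C$ and possibly $\bar P\cdot C\geq\eps$, so \cite[Theorem 6.4]{lm-nobody} no longer excludes $(0,\eps)$ from the surface body. The repair is exactly the paper's move: since Lemma \ref{okbdzdlem} has already identified $\oklim_{Y_\bullet}(D)$ with $\okbd_{\widetilde{Y}_\bullet}(P)$ and the slice inclusion lands in $\okbd_{\widetilde{Y}_{n-2\bullet}}(P|_{\widetilde{Y}_{n-2}})$, take the Zariski decomposition of $P|_{\widetilde{Y}_{n-2}}=P'+N'$ itself; then $P'\cdot C\leq P'\cdot C+N'\cdot C=P|_{\widetilde{Y}_{n-2}}\cdot C=P\cdot C<\eps$, using $C\not\subseteq\Supp N'$ (which holds because $x'\in C$ and $x'\notin\bm(P|_{\widetilde{Y}_{n-2}})$), and \cite[Theorem 6.4]{lm-nobody} is applied to the correct divisor.

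A secondary loose end: your reduction of the pseudoeffective-but-not-big case ``by running the argument on $D+\delta A$ and passing to the limit'' does not give $(1)\Rightarrow(2)$ when, say, $D$ is nef and not big: there $x\in\bp(D)=X$ for every $x$, yet $\bp(D+\delta A)=\emptyset$ for all $\delta>0$, so the big case produces no excluded points whose limit you could take. The paper instead disposes of non-big $D$ at the outset by noting $\vol_{\R^n}(\oklim_{Y_\bullet}(D))=0$, so the body cannot contain any set $U_{\geq0}$; you should do the same.
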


\begin{proof}
Suppose first that $D$ is not big. Then $\bp(D)=X$. Furthermore, we have $\dim\oklim_{Y_\bullet}(D)<n$ and $\vol_{\R^n} (\oklim_{Y_\bullet}(D))=0$ for any admissible flag $Y_\bullet$ (see \cite{CHPW}). Thus $\oklim_{Y_\bullet}(D)$ cannot contain any $n$-dimensional convex set, and thus, there is nothing to prove. Therefore, from now on, we assume that $D$ is big and we write $\oklim_{Y_\bullet}(D)=\okbd_{Y_\bullet}(D)$.

\noindent$(1)\Rightarrow(2)$:
Assume that $x \in \bp(D)$ and fix an admissible flag $Y_\bullet$ centered at $x$.
If $x \in \bm(D)$, then this implication follows from Theorem \ref{main1}.
Thus we may assume that $x \in \bp(D) \setminus \bm(D)$ so that the origin of $\R^n$ is contained in $\okbd_{Y_\bullet}(D)$.
By Lemma \ref{smflag} (and Remark \ref{remk_smflag}), we may assume that every subvariety $Y_\bullet$ in the admissible flag $Y_\bullet$ is smooth.
To derive a contradiction, suppose that a simplex $\blacktriangle_{\lambda}\subseteq\R^n$ of length $\lambda=(\epsilon, \ldots, \epsilon)$
with some $\epsilon >0$ is contained in $\okbd_{Y_\bullet}(D)$.

First, we consider the case where $D|_{Y_k}$ is not big for some $1 \leq k \leq n-1$.
Then arguing as above, we see that $\oklim_{Y_{k\bullet}}(D|_{Y_k})$ does not contain the simplex $(\blacktriangle_{\lambda})_{x_1=\cdots=x_k=0} $.
Now, take an ample divisor $A$ and a sufficiently small number $\delta > 0$.
Since $\okbd_{Y_\bullet}(\delta A)$ contains the origin of $\R^n$ by Corollary \ref{maincor1}, it follows from Lemma \ref{lem-okbd sum} (and Remark \ref{remk_additivity}) that
$$
\blacktriangle_{\lambda} \subseteq  \okbd_{Y_\bullet}(D)  \subseteq \okbd_{Y_\bullet}(D+\delta A).
$$
Since $x \not\in \bm(D)$, it follows that $x \not\in \bp(D+\delta A)$ and $Y_k \not\subseteq \bp(D+\delta A)$. By applying Theorem \ref{thrm-okbd slice}, we see that
$$
(\blacktriangle_{\lambda})_{x_1=\cdots=x_k=0} \subseteq \okbd_{Y_\bullet}(D+\delta A)_{x_1=\cdots=x_k=0} = \okbd_{Y_{k\bullet}}(D+\delta A) \subseteq \okbd_{Y_{k\bullet}}((D+\delta A)|_{Y_k}).
$$
However, $\oklim_{Y_{k\bullet}}(D|_{Y_k}) = \bigcap_{\delta > 0} \okbd_{Y_{k\bullet}}((D+\delta A)|_{Y_k})$ does not contain the simplex $(\blacktriangle_{\lambda})_{x_1=\cdots=x_k=0}$. This is a contradiction.

It remains to consider the case where $D|_{Y_k}$ is big for all $k$ such that $1 \leq k \leq n-1$. Fix an ample divisor $A$ on $X$.
Then by Theorem \ref{thrm-movSesha}, for a sufficiently small number $\delta > 0$,
there exist a birational morphism $f : \widetilde{X} \to X$ with $\widetilde{X}$ smooth which is isomorphic over a neighborhood of $x$ and an irreducible curve $C$ on $\widetilde{X}$ passing through $x':=f^{-1}(x)$ such that
$$
0 \leq \frac{P\cdot C}{\mult_{x'}C}<\epsilon
$$
where $f^*(D+\delta A)=P+N$ is the divisorial Zariski decomposition.
We take an admissible flag $\widetilde{Y}_\bullet$ on $\widetilde{X}$ by taking strict the transforms $\widetilde{Y}_i$ of $Y_i$ on $\widetilde{X}$.
Possibly by taking further blow-ups, we can assume that every subvariety $\widetilde{Y}_i$ is smooth.
By Lemmas \ref{birokbd} and \ref{okbdzdlem} and the above argument, we obtain
$$
\blacktriangle_{\lambda} \subseteq \okbd_{Y_\bullet}(D)=\okbd_{\widetilde{Y}_\bullet}(f^*D)=\okbd_{\widetilde{Y}_\bullet}(P).
$$
Since $x \not\in \bp(D+\delta A)$ and $Y_k \not\subseteq \bp(D+\delta A)$, we have $x' \not\in \bp(P)$ and $\widetilde{Y}_k \not\subseteq \bp(P)$. By applying Theorem \ref{thrm-okbd slice}, we see that
$$
(\blacktriangle_{\lambda})_{x_1=\cdots=x_k=0}\subseteq \okbd_{\widetilde{Y}_\bullet}(P)_{x_1=\cdots=x_k=0} = \okbd_{\widetilde{Y}_{k\bullet}}(P) \subseteq \okbd_{\widetilde{Y}_{k\bullet}}(P|_{\widetilde{Y}_k})
$$
for $0 \leq k \leq n-1$.

Suppose that there exists an integer $k$ with $0 \leq k \leq n-2$ such that $C \subseteq \widetilde{Y}_k$ and $C \not\subseteq \widetilde{Y}_{k+1}$. We have
$$
\frac{P\cdot C}{\mult_{x'}C}=\frac{P|_{\widetilde{Y}_k}\cdot
C}{\mult_{x'}C}<\epsilon .
$$
By Lemma \ref{easylem}, $(\epsilon, 0, \cdots, 0) \not\in \okbd_{\widetilde{Y}_{k\bullet}}(P|_{\widetilde{Y}_k})$ holds in $\R^{n-k}$. Since $(\blacktriangle_{\lambda})_{x_1=\cdots=x_k=0} \subseteq \okbd_{\widetilde{Y}_{k\bullet}}(P|_{\widetilde{Y}_k})$, we get a contradiction.
Therefore, we must have $C = \widetilde{Y}_{n-1}$. In this case, we have $\mult_{x'}C=1$, so $0 \leq P\cdot C < \epsilon$. Let $P|_{\widetilde{Y}_{n-2}}=P'+N'$ be the Zariski decomposition on the smooth surface $\widetilde{Y}_{n-2}$.
If $x' \in \Supp(N')$, then the origin of $\R^2$ is not contained in $\okbd_{\widetilde{Y}_{n-2 \bullet}}(P|_{\widetilde{Y}_{n-2}})$ by Theorem \ref{main1}. This gives a contradiction to that $(\blacktriangle_{\lambda})_{x_1=\cdots=x_{n-2}=0} \subseteq \okbd_{\widetilde{Y}_{n-2 \bullet}}(P|_{\widetilde{Y}_{n-2}})$. Thus we assume that $x' \not\in \Supp(N')$. Then we have
$\okbd_{\widetilde{Y}_{n-2 \bullet}}(P|_{\widetilde{Y}_{n-2}}) = \okbd_{\widetilde{Y}_{n-2 \bullet}}(P')$.
Since $C \not\subseteq \Supp(N')$, we have
$$
P' \cdot C \leq P' \cdot C + N' \cdot C =  P|_{\widetilde{Y}_{n-2}} \cdot C = P \cdot C < \epsilon .
$$
By \cite[Theorem 6.4]{lm-nobody}, $(0, \epsilon) \not\in \okbd_{\widetilde{Y}_{n-2\bullet}}(P|_{\widetilde{Y}_{n-2}})$ in $\R^2$, which is again a contradiction. Hence we complete the proof for the implication $(1)\Rightarrow(2)$.

\noindent$(2)\Rightarrow(3)$: Obvious.

\noindent$(3)\Rightarrow(1)$:
Suppose that $x \not\in \bp(D)$.
For some sufficiently small ample divisor $A$ on $X$, we have $\bp(D)=\bm(D- A)$. By Theorem \ref{main1}, for any admissible flag $Y_\bullet$ centered at $x$, the origin of $\R^n$ is contained in $\okbd_{Y_\bullet}(D-A)$.
By Lemma \ref{ampleorigin}, the Okounkov body $\okbd_{Y_\bullet}(A)$ contains $U_{\geq 0}$ for some open neighborhood $U$ of the origin of $\R^n$.
By Lemma \ref{lem-okbd sum}, we have
$$
U_{\geq 0}\subseteq \okbd_{Y_\bullet}(D-A)+\okbd_{Y_\bullet}( A)\subseteq\okbd_{Y_\bullet}(D).
$$
Therefore, $U_{\geq0}$ is also contained in $\okbd_{Y_\bullet}(D)$.
\end{proof}

Recall that a divisor $D$ is ample if and only if $\bp(D)=\emptyset$. Thus we immediately obtain the following ampleness criterion of divisors.

\begin{corollary}\label{cor-ampleness}
Let $D$ be a divisor on $X$. Then the following are equivalent.
\begin{enumerate}
 \item[$(1)$] $D$ is ample.

 \item[$(2)$] For any admissible flag $Y_\bullet$, $U_{\geq0}$ is contained in $\okbd_{Y_\bullet}(D)$ for some small open neighborhood $U$ of the origin of $\R^n$.

 \item[$(3)$] For any point $x \in X$, there exists an admissible flag $Y_\bullet$ centered at $x$ such that  $U_{\geq0}$ is contained in $\okbd_{Y_\bullet}(D)$ for some small open neighborhood $U$ of the origin of $\R^n$.
\end{enumerate}
\end{corollary}

\section{Bounds for moving Seshadri constants via Okounkov bodies}\label{movsessec}

In this section, we prove Theorem \ref{main3}. Throughout this section, $X$ is a smooth projective variety of dimension $n$. Since every assertion in this section is trivial for the curve case, we assume that $n \geq 2$.

For a convex subset $\okbd\subseteq \R^n$ containing the origin of $\R^n$, we define the \emph{maximal sub-simplex} of $\okbd$ as the simplex $\blacktriangle_\lambda$ of length $\lambda=(\lambda_1, \cdots, \lambda_n)$ where
$\lambda_i:=\max\{x_i \mid (0,\cdots,0,x_i,0,\cdots,0)\in\okbd\}$ is the $i$-th maximal length of $\okbd$ for each $i$. Note that we may have $\lambda_i=0$ for some $i$.
If the origin of $\R^n$ is not contained in $\okbd$, then we let its maximal sub-simplex be the empty set and $\lambda_i=0$ for all $i$.
If $\okbd=\oklim_{Y_\bullet}(D)$ for a pseudoeffective divisor $D$ on $X$  and $Y_\bullet$ is an admissible flag centered at $x\in X$, then each $i$-th maximal length $\lambda_i$ of $\okbd_{Y_\bullet}(D)$ depends on $D$, $x$, and $Y_\bullet$. Thus we can write $\lambda_i=\lambda_i(D;x,Y_\bullet)$.

We first compute the bounds for the Seshadri constant of nef and big divisors.

\begin{theorem}\label{sesthm}
Let $D$ be a nef and big divisor on $X$, and $x \in X$ be a point such that  $x \not\in \bp(D)$.
Fix an admissible flag $Y_\bullet$ centered at $x$.
Let $\blacktriangle_{\lambda}$ be the maximal sub-simplex of $\okbd_{Y_\bullet}(D)$ of length $\lambda = (\lambda_1, \cdots, \lambda_n)$ where $\lambda_i=\lambda_i(D;x,Y_\bullet)$.
Then we have
$$
\lambda_{\min} \leq \eps(D; x) \leq \lambda_n
$$
where $\lambda_{\min}:=\min_{1 \leq i \leq n} \{ \lambda_i \}$.
\end{theorem}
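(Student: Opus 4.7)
The plan is to handle the two inequalities separately. For the upper bound $\eps(D;x) \leq \lambda_n$, observe that since $Y_\bullet$ is centered at $x$ and $x \not\in \bp(D)$, we have $Y_{n-1} \not\subseteq \bp(D)$. Combining Lemma \ref{lem-okbd slice} with Remark \ref{rem-restric okbd} (applied to the nef and big divisor $D$), the slice $\okbd_{Y_\bullet}(D) \cap (\{0\}^{n-1} \times \R)$ equals $\{0\}^{n-1} \times \okbd_{Y_{(n-1)\bullet}}(D|_{Y_{n-1}})$, which on the curve $Y_{n-1}$ is the segment $[0, D \cdot Y_{n-1}]$ by the standard computation on curves (\cite[Example 1.14]{lm-nobody}). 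Hence $\lambda_n = D \cdot Y_{n-1}$, and since $Y_{n-1}$ is smooth at $x$, giving $\mult_x Y_{n-1}=1$, we obtain $\eps(D;x) = \eps'(D;x) \leq \frac{D \cdot Y_{n-1}}{\mult_x Y_{n-1}} = \lambda_n$, where $\eps = \eps'$ holds by the nefness of $D$.

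For the lower bound $\lambda_{\min} \leq \eps(D;x)$, I argue by contradiction, assuming $\eps(D;x) < \lambda_{\min}$ (in particular $\lambda_{\min} > 0$). Using again $\eps(D;x) = \eps'(D;x)$, we can produce an irreducible curve $C$ through $x$ with $\frac{D \cdot C}{\mult_x C} < \lambda_{\min}$. Let $k$ be the largest integer with $C \subseteq Y_k$ (with convention $Y_0 = X$), so $0 \leq k \leq n-1$ and $C \not\subseteq Y_{k+1}$. Since $x \in Y_k$ and $x \not\in \bp(D)$, we have $Y_k \not\subseteq \bp(D)$, which for the nef divisor $D$ (by Nakamaye's description of $\bp(D)$ as the null locus) forces $D|_{Y_k}$ to be nef and big. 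Moreover $Y_{k\bullet}$ is an admissible flag on $Y_k$ centered at $x$, and by the projection formula $\frac{(D|_{Y_k}) \cdot C}{\mult_x C} = \frac{D \cdot C}{\mult_x C} < \lambda_{k+1}$.

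Applying Lemma \ref{easylem} to the divisor $D|_{Y_k}$, the admissible flag $Y_{k\bullet}$ on $Y_k$, and the curve $C$ (whose first flag element is $Y_{k+1}$, and $C \not\subseteq Y_{k+1}$), we conclude that the point $(\lambda_{k+1}, 0, \ldots, 0) \in \R^{n-k}$ does not lie in $\oklim_{Y_{k\bullet}}(D|_{Y_k})$. On the other hand, by Remark \ref{rem-restric okbd} we have $\{0\}^k \times \oklim_{Y_{k\bullet}}(D|_{Y_k}) = \okbd_{Y_{k\bullet}}(D)$, and by Lemma \ref{lem-okbd slice} this equals the slice $\okbd_{Y_\bullet}(D)_{x_1 = \cdots = x_k = 0}$. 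Thus the point of $\R^n$ with $\lambda_{k+1}$ in the $(k+1)$-th coordinate and zeros elsewhere lies outside $\okbd_{Y_\bullet}(D)$, contradicting that it lies in the closed body by the very definition of $\lambda_{k+1}$. The main obstacle is keeping this reduction careful: verifying that $D|_{Y_k}$ remains nef and big, that $Y_{k\bullet}$ descends to an admissible flag on $Y_k$, and that the curve $C$ escapes the deeper strata of the flag, so that Lemma \ref{easylem} and Remark \ref{rem-restric okbd} can be invoked cleanly on the lower-dimensional variety $Y_k$.
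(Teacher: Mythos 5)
Your proof is correct and follows essentially the same route as the paper: the upper bound via the last slice $\okbd_{Y_\bullet}(D)_{x_1=\cdots=x_{n-1}=0}=[0,D\cdot Y_{n-1}]$ and $\mult_x Y_{n-1}=1$, and the lower bound via Lemma \ref{easylem} applied on the deepest flag element containing $C$. The only cosmetic difference is that the paper phrases the lower bound as an induction on dimension (splitting into $C\not\subseteq Y_1$ and $C\subseteq Y_1$), whereas you unroll that induction by picking the largest $k$ with $C\subseteq Y_k$ directly; the content, including the reduction to smooth flags via Lemma \ref{smflag} and the identification of slices via Remark \ref{rem-restric okbd} and Lemma \ref{lem-okbd slice}, is the same.
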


\begin{proof}
By Lemma \ref{smflag} (and Remark \ref{remk_smflag}), we can assume that all subvarieties $Y_i$ in the admissible flag  $Y_\bullet$ are smooth.
By Theorem \ref{main2}, we have $\lambda_i > 0$ for every $i$. Since $Y_n=\{x\} \not\subseteq \bp(D)$, it follows that $Y_{k} \not\subseteq \bp(D)$ for all $1 \leq k \leq n-1$.
Thus $D|_{Y_k}$ is nef and big for all $0 \leq k \leq n-1$, and $\okbd_{Y_{k\bullet}}(D) = \okbd_{Y_{k\bullet}}(D|_{Y_k})$.
By Theorem \ref{thrm-okbd slice}, we have
$$
\okbd_{Y_\bullet}(D)_{x_1=\cdots=x_k=0} = \okbd_{Y_{k\bullet}}(D) = \okbd_{Y_{k\bullet}}(D|_{Y_k}).
$$
We then see that $(\blacktriangle_{\lambda})_{x_1=\cdots=x_k=0}$ is the maximal sub-simplex of $\okbd_{Y_{k\bullet}}(D|_{Y_k})$.

First, we show the upper bound.
Recall that
$$
\eps(D;x) = \inf_{C} \left\{\frac{D\cdot C}{\mult_x C} \right\}
$$
where $\inf$ runs over all irreducible curves $C$ passing through $x$.
We have $Y_{n-1} \not\subseteq \bp(D)$. It follows from Theorem \ref{restokbd} and the above observation that
$$
\okbd_{Y_\bullet}(D)_{x_1=\cdots=x_{n-1}=0} = \{ (0,  \ldots, 0, x_n) \mid 0 \leq x_n \leq  \vol_{X|Y_{n-1}}(D)   \}.
$$
Note that $\lambda_n= \vol_{X|Y_{n-1}}(D)= D\cdot Y_{n-1}$ and $\mult_x Y_{n-1}=1$.
Thus we have
$$
\eps(D;x)  = \inf_{C} \left\{ \frac{D\cdot C}{\mult_x C} \right\} \leq \frac{D\cdot Y_{n-1}}{\mult_x Y_{n-1}} = \lambda_n.
$$

For the lower bound, it suffice to prove that
$$
\lambda_{\min}\leq\frac{D\cdot C}{\mult_x C}
$$
for any irreducible curve $C$ passing through $x$.
Note that $(\lambda_{\min}, 0, \cdots, 0) \in \okbd_{Y_\bullet}(D)$.
If $C \not\subseteq Y_1$, then it follows from Lemma \ref{easylem} that $\lambda_{\min}\leq\frac{D\cdot C}{\mult_x C}$.
When $C \subseteq Y_1$, we use the induction on the dimension $n(\geq 2)$ of $X$.
If $n=2$, then $C = Y_1$. Since $\mult_x C = 1$, we have
$$
\lambda_{\min}\leq  \lambda_2=\vol_{X|C}(D)=D\cdot C=\frac{D\cdot C}{\mult_x C}.
$$
Now we suppose that $n \geq 3$. In this case, by induction, we obtain
$$
\lambda_{\min}\leq \min_{2 \leq i \leq n} \{ \lambda_i \} \leq \frac{D|_{Y_1}\cdot C}{\mult_xC}=\frac{D\cdot C}{\mult_xC}.
$$
Hence, for any irreducible curve $C$ passing through $x$, we have $\lambda_{\min}\leq \frac{D\cdot C}{\mult_x C}$ as desired.
\end{proof}

We prove Theorem \ref{main3} as Theorem \ref{movsesthm}.

\begin{theorem}\label{movsesthm}
Let $D$ be a  pseudoeffective divisor on $X$, and $x$ be a point on $X$.
Then we have
$$
\sup_{Y_\bullet}\{\lambda_{\min}(D;x,Y_\bullet)\} \leq \eps(||D||;x) \leq \inf_{Y_\bullet}\{\lambda_n(D;x,Y_\bullet)\}
$$
where $\sup$ and $\inf$ are taken over the admissible flags $Y_\bullet$ centered at $x$.
\end{theorem}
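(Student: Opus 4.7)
The plan is to reduce the pseudoeffective case to the ample case, where Theorem \ref{sesthm} applies, by combining a Fujita-type approximation (Lemma \ref{lem-lambda_min}) with the definition of the moving Seshadri constant.

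I would first dispose of the degenerate case $x \in \bp(D)$, where $\eps(||D||;x) = 0$ by definition. The upper bound is then automatic since $\lambda_n(D;x,Y_\bullet) \geq 0$. For the lower bound, I claim $\lambda_{\min}(D;x,Y_\bullet) = 0$ for every admissible flag $Y_\bullet$ centered at $x$: otherwise, by the convexity of $\oklim_{Y_\bullet}(D)$ the full maximal sub-simplex $\blacktriangle_{\lambda}$ is contained in $\oklim_{Y_\bullet}(D)$, so $\oklim_{Y_\bullet}(D)$ contains $U_{\geq 0}$ for some open neighborhood $U$ of the origin, contradicting Theorem \ref{thrm-B+criterion}.

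From now on assume $x \not\in \bp(D)$. For the upper bound, fix an admissible flag $Y_\bullet$ centered at $x$ and $\delta > 0$. By Definition \ref{def-movSesha}, choose $f : \widetilde X \to X$ isomorphic over a neighborhood of $x$ and a decomposition $f^*D = A + E$ with $A$ ample, $f^{-1}(x) \not\in \Supp(E)$, and $\eps(A; x') > \eps(||D||;x) - \delta$, where $x' := f^{-1}(x)$. Let $\widetilde Y_\bullet$ be the strict transform flag on $\widetilde X$, so that Lemma \ref{birokbd} gives $\oklim_{\widetilde Y_\bullet}(f^*D) = \oklim_{Y_\bullet}(D)$. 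Since $x' \not\in \Supp(E) \supseteq \bm(E)$, Theorem \ref{thrm-B-criterion} yields $0 \in \oklim_{\widetilde Y_\bullet}(E)$, and Lemma \ref{lem-okbd sum} gives
$$\okbd_{\widetilde Y_\bullet}(A) \subseteq \okbd_{\widetilde Y_\bullet}(A) + \oklim_{\widetilde Y_\bullet}(E) \subseteq \oklim_{\widetilde Y_\bullet}(f^*D).$$
In particular $\lambda_n(A; x', \widetilde Y_\bullet) \leq \lambda_n(D; x, Y_\bullet)$. Applying Theorem \ref{sesthm} to the ample $A$ produces $\eps(A; x') \leq \lambda_n(A; x', \widetilde Y_\bullet)$; chaining the inequalities, letting $\delta \to 0$, and taking the infimum over $Y_\bullet$ yields the upper bound.

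For the lower bound, fix $Y_\bullet$ centered at $x$ and $\eps > 0$. Lemma \ref{lem-lambda_min} furnishes $f : \widetilde X \to X$ isomorphic near $x$ and a decomposition $f^*D = A + E$ with $A$ ample and $\lambda_{\min}(A; x', \widetilde Y_\bullet) > \lambda_{\min}(D; x, Y_\bullet) - \eps$. Because $x \not\in \bp(D)$, one may additionally arrange $f^{-1}(x) \not\in \Supp(E)$, so the decomposition is admissible in Definition \ref{def-movSesha} and consequently $\eps(A; x') \leq \eps(||D||;x)$. Theorem \ref{sesthm} applied to the ample $A$ gives $\lambda_{\min}(A; x', \widetilde Y_\bullet) \leq \eps(A; x')$; letting $\eps \to 0$ and taking the supremum over $Y_\bullet$ produces the lower bound. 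The principal obstacle is arranging $f^{-1}(x) \not\in \Supp(E)$ in this Fujita-approximation step: this is precisely where the hypothesis $x \not\in \bp(D)$ enters, ensuring that the approximating ample decomposition qualifies in the definition of the moving Seshadri constant.
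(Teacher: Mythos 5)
Your proposal is correct and follows essentially the same route as the paper: handle $x\in\bp(D)$ via Theorem \ref{thrm-B+criterion}, reduce the upper bound to Theorem \ref{sesthm} applied to the ample part of a decomposition from Definition \ref{def-movSesha}, and reduce the lower bound to Theorem \ref{sesthm} via the Fujita approximation of Lemma \ref{lem-lambda_min}. You in fact justify two points the paper leaves implicit, namely the inclusion $\okbd_{\widetilde Y_\bullet}(A)\subseteq\oklim_{\widetilde Y_\bullet}(f^*D)$ giving $\lambda_n(A;x',\widetilde Y_\bullet)\leq\lambda_n(D;x,Y_\bullet)$, and the need to arrange $f^{-1}(x)\not\in\Supp(E)$ so the approximating decomposition is admissible in the definition of the moving Seshadri constant.
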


\begin{proof}
It is enough to show that
$$
\lambda_{\min}(D;x,Y_\bullet) \leq \eps(||D||;x) \leq \lambda_{n}(D;x,Y_\bullet)
$$
for any fixed admissible flag $Y_\bullet$ on $X$ centered at $x$.

If $x \in \bp(D)$, then $\eps(||D||;x)=0$.
 If $x\in\bm(D)$, then Theorem \ref{main1} implies that the origin of $\R^n$ is not contained in $\oklim_{Y_\bullet}(D)$ and $\lambda_{\min}(D;x,Y_\bullet)=0$ by definition.
Even when $x\not\in\bm(D)$, Theorem \ref{main2} implies that $\lambda_{\min}(D;x,Y_\bullet)=0$ for any admissible flag $Y_\bullet$ centered at $x$.
Thus we assume below that $x \not \in \bp(D)$, which in particular implies that $D$ is big.

As in the proof of Theorem \ref{thrm-movSesha}, using \cite[Proposition 3.7]{lehmann-nu}, we can take an ample divisor $H$ on $X$ and a birational morphism $f:\widetilde{X} \to X$ isomorphic over a neighborhood of $x$ and a decomposition $f^*D = M+F$ into a nef and big divisor $M$ and an effective divisor $F$ such that
$$
M \leq P \leq M + \frac{1}{m}f^*H
$$
where $f^*D = P+N$ is the divisorial Zariski decomposition and $m>0$ is a sufficiently large integer.
Note that the choice of $H$ is independent of $m$ and we may assume $x \not\in \Supp(H)$.
Let $\widetilde{Y}_\bullet$ be the admissible flag on $\widetilde{X}$ obtained by taking the strict transforms of subvarieties of $Y_\bullet$, and $x':=f^{-1}(x)$. By Lemmas \ref{birokbd} and \ref{okbdzdlem}, $\okbd_{Y_\bullet}(D) =\okbd_{\widetilde{Y}_\bullet}(P)$.
Note that $P-M=F-N$ is an effective divisor and $F-N \leq F \leq f^*D$.
Since $x \not\in \bp(D)$, it follows that $x \not\in \Supp(F-N)$ and so $x \not\in \text{SB}(P-M)$. On the other hand, since $0 \leq M + \frac{1}{m}f^*H - P \leq \frac{1}{m}f^*H$ and $x \not\in \Supp(H)$, it follows that $x \not\in \text{SB}\left( M + \frac{1}{m}f^*H - P \right)$. By Lemma \ref{lem-okbd sum} (and Remark \ref{remk_additivity}), we have
$$
\okbd_{\widetilde{Y}_{\bullet}}(M) \subseteq \okbd_{\widetilde{Y}_\bullet}(P) \subseteq \okbd_{\widetilde{Y}_\bullet}\left( M + \frac{1}{m} f^*H \right).
$$
Thus we obtain
$$
\lambda_i(M;x',\widetilde{Y}_\bullet) \leq \lambda_i(D;x,Y_\bullet) \leq \lambda_i \left( M + \frac{1}{m} f^*H ; x', \widetilde{Y}_\bullet \right)
$$
for any $i$ such that  $1 \leq i \leq n$.

Note that $\eps(||D||;x) \leq \eps \left( M + \frac{1}{m}f^*H; x' \right) \leq \eps \left( ||D +  \frac{1}{m}H|| ; x\right)$.
Recall that $\eps(||\cdot||;x) : N^1(X)_{\R} \to \R_{\geq 0}$ is continuous.
For sufficiently small $\delta > 0$, by possibly taking sufficiently large $m>0$ and taking further blow-ups of $\widetilde{X}$, we have
$$
 \eps \left( M + \frac{1}{m}f^*H; x' \right) - \delta \leq \eps(||D||;x) \leq \eps(M; x') + \delta.
$$
Now we apply Theorem \ref{sesthm} to see that
$$
\eps(M; x') \leq \lambda_n(M; x', \widetilde{Y}_\bullet) ~~\text{ and }~~ \lambda_{\min}\left( M + \frac{1}{m} f^*H ; x', \widetilde{Y}_\bullet \right) \leq \eps \left(M + \frac{1}{m} f^*H; x' \right).
$$
For the upper bound, we note that
$$
\eps(||D||;x) - \delta \leq \eps(M; x') \leq  \lambda_n(M; x', \widetilde{Y}_\bullet)  \leq  \lambda_n(D;x,Y_\bullet).
$$
Since $\delta > 0$ can be chosen arbitrarily small, we get $\eps(||D||;x)  \leq  \lambda_i(D;x,Y_\bullet)$ as desired.
For the lower bound, we also note that
$$
\lambda_{\min}(D; x, Y_\bullet) \leq \lambda_{\min} \left( M + \frac{1}{m} f^*H ; x', \widetilde{Y}_\bullet \right) \leq  \eps \left(M + \frac{1}{m} f^*H; x' \right) \leq \eps(||D||;x) + \delta.
$$
Since $\delta > 0$ can be chosen arbitrarily small, we also obtain $ \lambda_{\min}(D;x,Y_\bullet) \leq \eps(||D||;x) $ as desired.
\end{proof}

\begin{example}\label{equality}
Let $m$ be a positive integer.
Let $X=\P^2$ and $D\sim L$  where $L$ is a line on $\P^2$.
Consider an admissible flag $Y_\bullet$ where $Y_1$ is a general member of $|mL|$.
Then
$$
\okbd_{Y_\bullet}(D) = \left\{(x_1, x_2) \in \R_{\geq 0}^2 \left| \; m^2x_1+x_2 \leq m \right.\right\}.
$$
Thus $\lambda_{\min}(D;x,Y_\bullet)=\frac{1}{m}$ and $\lambda_2(D;x,Y_\bullet)=m$.
Since $\eps(D;x)=1$, the inequalities in Theorem \ref{sesthm} are strict if $m>1$.
However, if $m=1$, then the equalities in Theorems \ref{sesthm} and \ref{movsesthm} hold.
\end{example}

\begin{example}\label{strineqrem}
As in \cite[Remark 4.9]{AV-loc pos}, we also consider a fake projective plane $S$ such that $K_S=3H$ where $\Pic(S)=\Z \cdot [H]$ and $H^2=1$ (see \cite[10.4]{PS} for the existence of such a surface).
Fix an admissible flag $Y_\bullet : Y_0=S \supseteq Y_1=C \supseteq Y_0=\{x\}$ where $x$ is a very general point on $S$.
Note that $H^0(S, K_S)=0$ and so $H^0(S, H)=0$. Thus $C \in |kH|$ for some integer $k>1$.
As in the previous example, we have
$$
\okbd_{Y_\bullet}(H) = \left\{(x_1, x_2) \in \R_{\geq 0}^2 \left| \; k^2x_1+x_2 \leq k  \right.\right\}.
$$
Note that $\eps(H;x)=1$. However, since we always have $k>1$, both inequalities in Theorem \ref{movsesthm} are strict.
\end{example}

We finally present an application of Theorem \ref{main3}.
Let $A$ be an ample $\Z$-divisor on $X$. We may regard $X$ as a compact complex manifold, and we can choose a K\"{a}hler from $\omega_A$ representing $c_1(A) \in H^2(X, \C)$. Then $(X, \omega_A)$ is a symplectic manifold of real dimension $2n$. The \emph{Gromov width} $w_G(X, \omega_A)$ is the supremum of all $\lambda > 0$ for which there exists a $\mathcal{C}^{\infty}$ embedding $j: B(\lambda) \hookrightarrow X$ of the open ball $B(\lambda)$ of radius $\lambda$ with the standard symplectic form $\omega_{\text{std}}$ into $X$ such that $j^*\omega_A = \omega_{\text{std}}$. In \cite{MP} (see also \cite[Theorem 5.1.22]{pos}), McDuff and Polterovich prove that
$$
\omega_G(X, \omega_A) \geq \sqrt{\frac{\sup_{x \in X} \eps(A;x)}{\pi}}.
$$
As an immediate corollary of Theorem \ref{main3}, we obtain the following, which was independently shown by Kaveh using a different method (\cite[Corollary 12.4]{K}).

\begin{corollary}\label{kaveh}
Let $A$ be an ample $\Z$-divisor on $X$. Then we have
$$
\omega_G(X, \omega_A) \geq \sqrt{\frac{\sup_{x \in X} \sup_{Y_\bullet}\{ \xi_{\min}(A;x,Y_\bullet) \}}{\pi}}
$$
where the right $\sup$ is taken over the admissible flags $Y_\bullet$ centered at $x$.
\end{corollary}

\begin{remark}\label{gromov}
In \cite[Theorem 1.1 and Corollary 1.2]{FLP}, Fang-Littelmann-Pabiniak compute the Gromov widths of coadjoint orbits of all compact connected simple Lie groups. Recall that such a coadjoint orbit is symplectomorphic to a flag variety $(X=G/P, \omega_A)$ where $A$ is a very ample divisor.
They actually show that there exists a certain Okounkov body $\okbd_{\nu}(A)$ with respect to a valuation $\nu$ of $\C(X)$ such that the minimum $\lambda_{\min}$ of maximal lengths $\lambda_i$ of $\okbd_{\nu}(A)$ computes the Gromov width.
Let $f : \widetilde{X} \to X$ be a birational morphism such that there exists an admissible flag $Y_\bullet$ on $\widetilde{X}$ with $\okbd_{\nu}(f^*A)=\okbd_{Y_\bullet}(f^*A)$ (see Remark \ref{valuation}).
Then we see that the Okounkov body $\okbd_{Y_\bullet}(f^*A)$ computes $\sup_{x \in \widetilde{X}} \eps(f^*A; x)$.
\end{remark}


\begin{thebibliography}{ELMNP1}

\bibitem[B]{B} S. Boucksom, \textit{Divisorial Zariski decompositions on compact complex manifolds},
Ann. Sci. \'{E}c. Norm. Sup\'{e}r. (4) \textbf{37} (2004), 45-76.

\bibitem[CHPW]{CHPW} S. Choi, Y. Hyun, J. Park, and J. Won, \textit{Okounkov bodies associated to pseudoeffective divisors}, preprint (2015), arXiv:1508.03922v4.

\bibitem[CPW]{CPW} S. Choi, J. Park, and J. Won, \textit{Okounkov bodies and Zariski decompositions on surfaces}, Bull. Korean Math. Soc. \textbf{54} (2017), 1677-1697 (special volume for Magadan Conference).

\bibitem[CPW2]{CPW2} S. Choi, J. Park, and J. Won, \textit{Okounkov bodies associated to pseudoeffective divisors II}, Taiwanese J. Math. \textbf{21} (2017), 602-620 (special issue for the proceedings of the conference ``Algebraic Geometry in East Asia 2016'').

\bibitem[CFKLRS]{CFKLRS} C. Ciliberto, M. Farnik, A. K\"{u}ronya, V. Lozovanu, J. Ro\'{e}, and C. Shramov, \textit{Newton-Okounkov bodies sprouting on the valuative tree}, Rend. Circ. Mat. Palermo, II Ser. \textbf{66} (2017), 161-194.


\bibitem[ELMNP1]{elmnp-asymptotic inv of base}  L. Ein, R. Lazarsfeld, M. Musta\c{t}\u{a}, M. Nakamaye, and M. Popa,
\textit{Asymptotic invariants of base loci}, Ann. Inst. Fourier, \textbf{56} (2006), 1701-1734.

\bibitem[ELMNP2]{elmnp-restricted vol and base loci} L. Ein, R. Lazarsfeld, M. Musta\c{t}\u{a}, M. Nakamaye, and M. Popa, \textit{Restricted volumes and base loci of linear series}, Amer. J. Math. \textbf{131} (2009),
607-651.

\bibitem[FLP]{FLP} X. Fang, P. Littelmann, and M. Pabiniak, \textit{Simplices in Newton-Okounkov bodies and the Gromov width of coadjoint orbits}, preprint (2016), arXiv:1607.01163.

\bibitem[I]{I} A. Ito, \textit{Okounkov bodies and Seshadri constants}, Adv. Math. \textbf{241} (2013), 246-262.

\bibitem[J]{jow} S.-Y. Jow, \textit{ Okounkov bodies and restricted volumes along very general curves}, Adv. Math. \textbf{223} (2010), 1356-1371.

\bibitem[K]{K} K. Kaveh, \textit{Toric degenerations and symplectic geometry of smooth projective varieties}, preprint (2015), arXiv:1508.00316v3.

\bibitem[KK]{KK} K. Kaveh and A. G. Khovanskii, \textit{Newton convex bodies, semigroups of integral points, graded algebras and intersection theory}, Ann. Math. (2) \textbf{176} (2012), 925-978.

\bibitem[KL1]{AV-loc pos} A. K\"{u}ronya and V. Lozovanu, \textit{Local positivity of linear series on surfaces}, prerint (2014), arXiv:1411.6205.

\bibitem[KL2]{AV-loc pos2} A. K\"{u}ronya and V. Lozovanu,
\textit{Positivity of line bundles and Newton-Okounkov bodies}, prerpint (2015), arXiv:1506.06525.

\bibitem[KL3]{AV-loc pos3} A. K\"{u}ronya and V. Lozovanu,
\textit{Infinitesimal Newton-Okounkov bodies and jet separation}, Duke Math. J. \textbf{166} (2017),  1349-1376.


\bibitem[La]{pos} R. Lazarsfeld, \textit{Positivity in algebraic geometry I and II}, A Series of Modern Surveys in Math. \textbf{48-49}, Springer-Verlag, Berlin, (2004).

\bibitem[LM]{lm-nobody} R. Lazarsfeld and M. Musta\c{t}\u{a},
\textit{Convex bodies associated to linear series.}
Ann. Sci. Ec. Norm. Super. (4) \textbf{42} (2009), 783-835.

\bibitem[Le]{lehmann-nu} B. Lehmann, \textit{Comparing numerical dimensions}, Algebra Number Theory \textbf{7} (2013), 1065-1100.

\bibitem[MP]{MP} D. McDuff, and L. Polterovich, \textit{Symplectic packings and algebraic geometry} (with an appendix by Y. Karshon), Invent. Math. \textbf{115} (1994), 405-434.

\bibitem[Nm]{nakamaye} M. Nakamaye, \textit{Base loci of linear series are numerically determined}, Trans. Amer. Math. Soc. \textbf{335} (2003), 551-566.

\bibitem[Ny]{nakayama} N. Nakayama, \textit{Zariski-decomposition and abundance},
MSJ Memoirs, \textbf{14} (2004), Mathematical Society of Japan, Tokyo.

\bibitem[O1]{O1} A. Okounkov, \textit{Brunn-Minkowski inequality for multiplicities}, Invent. Math. \textbf{125} (1996), 405-411.

\bibitem[O2]{O2} A. Okounkov, \textit{Why would multiplicities be log-concave?} in \textit{The Orbit Method in Geometry and Physics}, Progr. Math. \textbf{213} (2003), Birkh\"{a}user, Boston, MA, 329-347.

\bibitem[PS]{PS} G. Prasad and S.-K. Yeung, \textit{Fake projective planes}, Invent. Math. \textbf{168} (2007), 321-370.

\bibitem[R]{Roe} J. Ro\'{e}, \textit{Local positivity in terms of Newton-Okounkov bodies}. Adv. Math. \textbf{301} (2016), 486-498.

\end{thebibliography}
\end{document}